\newtheorem{theorem}{Theorem}[section]
\newtheorem{lemma}[theorem]{Lemma}
\newtheorem{proposition}[theorem]{Proposition}
\newtheorem{corollary}[theorem]{Corollary}
\theoremstyle{definition}
\newcommand{\bN}{\mathbb{N}}
\newcommand{\bZ}{\mathbb{Z}}
\newcommand{\bR}{\mathbb{R}}
\newcommand{\bC}{\mathbb{C}}
\newcommand{\cA}{\mathcal{A}}
\newcommand{\cB}{\mathcal{B}}
\newcommand{\cN}{\mathcal{N}}
\newcommand{\cR}{\mathcal{R}}
\newcommand{\1}{\mathds{1}}
\newcommand{\ii}{\textnormal{i}}
\newcommand{\re}{\mathrm{e}}
\newcommand{\di}{\mathrm{d}}
\DeclareMathOperator{\dx}{d\textit{x} \hspace{1pt}}
\DeclareMathOperator{\dy}{d\textit{y} \hspace{1pt}}
\DeclareMathOperator{\sgn}{sgn}
\begin{document}

\title{Quasi-infinite divisibility of a class of distributions with discrete part}

\author{David Berger\thanks{Technische Universit\"at Dresden,
Institut f\"ur Mathematische Stochastik, 01062 Dresden, Germany, email: david.berger2@tu-dresden.de} \and Merve Kutlu \thanks{Universität Ulm, Institut für Finanzmathematik, 89081 Ulm, Germany;
email: merve.kutlu@uni-ulm.de}}
\date{\today}
\maketitle


\begin{abstract}
  We consider distributions on $\bR$ that can be written as the sum of a non-zero discrete distribution and an absolutely continuous distribution.
  We show that such a distribution is quasi-infinitely divisible if and only if its characteristic function is bounded away from zero, thus giving a new class of quasi-infinitely divisible distributions. 
  Moreover, for this class of distributions we characterize the existence of the $g$-moment for certain functions $g$. 
\end{abstract}
\maketitle

\section{Introduction}

Infinitely divisible distributions form a very important class of probability distributions with several applications, since they naturally correspond to L\'evy processes. 
A distribution $\mu$ is said to be \emph{infinitely divisible} if, for every $n\in \bN$, there exists some probability distribution $\mu_n$ such that $\mu = \mu_n^{\ast n}$, the $n$-fold convolution of $\mu_n$ with itself. 
Infinitely divisible distributions are completely characterized by the well-known L\'evy--Khintchine formula, which states that a distribution $\mu$ on $\bR$ is infinitely divisible if and only if its characteristic function admits the representation
\begin{equation} \label{eq-LK}
\widehat{\mu}(z)
\coloneqq \int_\bR e^{\ii zx} \mu(\dx)
= \exp \left( 
	\ii \gamma z 
	- \frac{az^2}{2}
	+ \int_{\bR} \left( \re^{\ii zx} - 1 - \ii zx \1_{(-1,1)} (x) \right) \, \nu(\di x)
\right)
\end{equation}
for all $z \in \bR$ with some $a \geq 0$, $\gamma\in \bR$ and $\nu$ being a L\'evy measure on $\bR$, that is, a measure satisfying $\nu(\{0\}) = 0$ and $\int_{\bR} (x^2 \wedge 1)\, \nu(\di x) < \infty$. 
The triplet $(a,\gamma,\nu)$ is then unique and called the characteristic triplet of $\mu$. 
This and further information regarding infinitely divisible distributions can be found in \cite{sato2013}.

Quasi-infinitely divisible distributions generalize the class of infinitely divisible distributions. 
By definition, a distribution $\mu$ is \emph{quasi-infinitely divisible} if there exist two infinitely divisible distributions $\mu_1$ and $\mu_2$ such that $ \widehat{\mu}(z) = \widehat{\mu_1}(z) / \widehat{\mu_2}(z)$ for all $z \in \bR$.
It can easily be seen that a distribution on $\bR$ is quasi-infinitely divisible if and only if its characteristic function admits the L\'evy-Khintchine representation \eqref{eq-LK} with $a, \gamma \in \bR$, and $\nu$ being a quasi-L\'evy type measure, which is the difference of two L\'evy measures. 
In this case, the triplet $(a, \gamma, \nu)$ is again unique and it is called the \emph{characteristic triplet} of $\mu$. 
Moreover, in this case, $\nu$ is called the \emph{quasi-L\'evy measure} of $\mu$ and $a$ its \emph{Gaussian variance}. 
It was shown in \cite[Lem. 2.7]{lindner2018} that then necessarily $a \geq 0$. 
One has to be careful with the definition of the quasi-L\'evy type measure, since formally, the difference of two L\'evy-measures is only defined for sets that are bounded away from zero, since both measures can be infinite.  
However, the difference is well-defined when restricting to $\bR \setminus (-r,r)$ for some $r > 0$. 
Note that for a quasi-L\'evy type measure $\nu$ and any function $f$ such that $|f(x)| \leq C(1 \land x^2)$ for some constant $C>0$ we can define the integral $\int_{\bR}f d\nu$ of $f$ with respect to $\nu$.
For this and further information on the quasi-L\'evy type measure, see \cite[Sect. 2]{lindner2018}. 
It can be easily checked that if the characteristic function of the distribution $\mu$ satisfies
\begin{align*}
\widehat{\mu}(z)
= \exp\left(
	\ii \gamma_0 z 
	- \frac{az^2}{2}
	+ \int_\bR \left(
		e^{\ii zx} -1
	\right) \nu(\dx)
\right)
\quad \textrm{for all } z \in \bR
\end{align*}
with $a, \gamma_0 \in \bR$ and a quasi-L\'evy type measure $\nu$ on $\bR$, then $\mu$ is quasi-infinitely divisible with characteristic triplet $(a, \gamma, \nu)$, where $\gamma = \gamma_0 + \int_{(-1,1)} x \nu(\dx)$.
In this case, $\mu$ is said to have (finite) \emph{drift} $\gamma_0$.

Applications of quasi-infinitely divisible distributions can be found in physics (Demni and Mouayn \cite{demni2015}, Chhaiba et al. \cite{chhaiba2016}), actuarial mathematics (Zhang et al. \cite{zhang2014}), and also in number theory (Nakamura \cite{nakamura2015} and Aoyama and Nakamura \cite{aoyama2013}).
The first systematic study of quasi-infinitely divisible distributions on $\bR$ goes back to Lindner et al. \cite{lindner2018}, although they have already appeared earlier, e.g. in Cuppens \cite{cuppens1975}, where it was shown that every distribution containing an atom of mass strictly greater than $1/2$ is quasi-infinitely divisible. 

A full characterization of quasi-infinite divisibility is not known yet, but for several special classes of distributions, some conditions are already known.
As an example, Lindner et al. \cite{lindner2018} showed that a distribution concentrated on the lattice $\bZ$ is quasi-infinitely divisible if and only if its characteristic function has no zeros.
In \cite[Thm. 3.2]{berger2022-2}, the authors generalized this characterization to distributions concentrated on the multivariate lattice $\bZ^d$.
In \cite{berger2018} the author considered distributions $\mu$ on $\bR$ of the form $\mu = p \mu_\textnormal{d} + (1-p) \mu_\textnormal{ac}$ with $0 < p \leq 1$, an absolutely continuous distribution $\mu_\textnormal{ac}$ and a distribution $\mu_d$ that is concentrated on the lattice $\bZ$ and satisfies $\widehat{\mu}_\textnormal{d}(z) \neq 0$ for $z \in \bR$. 
There it was shown that also such a distribution $\mu$ is quasi-infinitely divisible if and only if its characteristic function has no zeros. 
Alexeev and Khartov \cite{alexeev2021} showed that a general discrete distribution on the real line is quasi-infinitely divisible divisible if its characteristic function is bounded away from zero.
Later, it was shown by Khartov \cite{khartov2022} that this is also a necessary condition, i.e. he showed that a discrete distribution $\mu$ on $\bR$ is quasi-infinitely divisible if and only if $\inf_{z \in \bR}|\widehat{\mu}(z)| > 0$. 
Since the characteristic function of a distribution that is concentrated on the lattice $\bZ$ is a $2\pi$-periodic continuous function, and hence zero-free if any only if it is bounded away from zero, this result generalizes the characterization of Lindner et al. \cite{lindner2018} mentioned above.

In this paper we consider the class of distributions $\mu$ on $\bR$ that are of the form $\mu = p \mu_\textnormal{d} + (1-p) \mu_\textnormal{ac}$ with $0 < p \leq 1$, an arbitrary discrete distribution $\mu_\textnormal{d}$ and an absolutely continuous distribution $\mu_\textnormal{ac}$. 
This class includes the previously mentioned distributions considered by Berger \cite{berger2018} and Khartov \cite{khartov2022}.
One of the main results in this paper is that such a distribution $\mu$ is quasi-infinitely divisible if and only if its characteristic function is bounded away from zero. 
Moreover, we will study the existence of certain moments of $\mu$ in this case. 

Let $H: \bR \to [0, \infty)$ be a locally bounded, measurable submultiplicative function, for a definition see e.g. \cite[Def. 25.2]{sato2013}. 
Given an infinitely divisible distribution $\mu$ it is well known that the finiteness of the $H$-moment of $\mu$ can be characterized in terms of the L\'evy-measure $\nu$ of $\mu$: $\mu$ has finite $H$-moment if and only if $\nu |_{\bR \setminus B_1(0)}$ has finite $H$-moment.
This characterization does not hold for a quasi-infinitely divisible distribution $\mu$ with corresponding quasi-L\'evy measure $\nu$, as was shown in \cite[Ex. 6.3]{lindner2018} and \cite[Ex. 8.2 (a)]{berger2022-1}. 
However, Lindner et al. \cite{lindner2018} have shown that the above characterization is indeed true if $\mu$ is a quasi-infinitely divisible distribution which is concentrated on $\bZ$, and $H$ satisfies the GRS-condition: $
\lim_{|x| \to \infty} x^{-1}\log(H(x)) = 0$. 
We will show that this result can be generalized for distributions $\mu$ given by $\mu = p \mu_\textnormal{d} + (1-p) \mu_\textnormal{ac}$ with $0<p \leq 1$, $\mu_\textnormal{ac}$ being absolutely continuous and $\mu_{\textnormal{d}}$ discrete. 

We start in Section 2 by stating Theorem \ref{hauptthm}, which is a theorem on the representation of complex-valued functions of the form $ F(z) = \sum_{y \in A} a_y e^{\ii zy} + \int_{\bR} f(x)e^{\ii zx} \dx$ with a discrete set $A \subset \bR$, a sequence $(a_y)_{y \in A} \subset \bC$ such that $0 < \sum_{y \in A}|a_y| < \infty$ and $f \in L^1(\bR;\bC)$.
We will then obtain the aforementioned results as a special case, by applying this theorem to the characteristic functions of the distributions under consideration. 
In Section 3 we will then prove Theorem \ref{hauptthm}.

\section{Results}
Let $\mu$ be a distribution, that is, a probability measure on $\bR$, of the form 
\begin{align} \label{eq-mu}
\mu = p \mu_\textnormal{d} + (1-p) \mu_\textnormal{ac}
\end{align}
with $0 < p \leq 1$, a discrete distribution $\mu_\textnormal{d}$ and an absolutely continuous distribution $\mu_\textnormal{ac}$. 
Then there exist a discrete set $A \subset \bR$ and a sequence $(a_y)_{y \in A} \subset [0, \infty)$ such that $\sum_{y \in A} a_y = 1$ and $\mu_\textnormal{d} = \sum_{y \in A} a_y \delta_y$, where $\delta_y$ denotes the Dirac measure at a point $y \in \bR$. 
Moreover, we can find a Lebesgue density $f$ such that $\mu_\textnormal{ac}(\dx) = f(x) \dx$.
The characteristic function of $\mu$ is then given by 
\begin{align*}
\widehat{\mu}(z)
= p \sum_{y \in A} a_y e^{\ii z y}
+ (1-p)\int_\bR f(x) e^{\ii zx} \dx, 
\quad z \in \bR.
\end{align*}
\thref{hauptthm} gives a result for a class of more general, complex-valued functions on $\bR$, and can be used to obtain a characterization of quasi-infinite divisibility of $\mu$ as in \eqref{eq-mu}. 
Moreover, this theorem can be used to characterize the existence of the $H$-moment of $\mu$ for certain functions $H$ in the case that $\mu$ is quasi-infinitely divisible. 

A \emph{weight} on $\bR$ is a locally bounded, measurable function $\omega: \bR \to [1, \infty)$ such that 
\begin{align*}
\omega(x+y) 
\leq \omega(x) \omega(y)
\quad \textrm{for all } x,y \in \bR.
\end{align*}
Given a weight $\omega$ on $\bR$, we can define the \emph{weighted $L^1$-space} 
\begin{align*}
L_\omega^1(\bR; \bC) 
\coloneqq \{f: \bR \to \bC: \int_{\bR} \omega(x) |f(x)| \dx < \infty\};
\end{align*}
observe that $L_\omega^1(\bR; \bC) \subset L^1(\bR; \bC)$ since $\omega \geq 1$. 
We say that a function $H: \bR \to [0, \infty)$ satisfies the \emph{GRS-condition} if 
\begin{align*}
\lim_{|x| \to \infty} \frac{\log(H(x))}{x}
= 0. 
\end{align*}
If $\omega$ is a weight function on $\bR$ that satisfies the GRS-condition, then due to $\log(\omega(x))/x \to 0$ as $x \to \infty$, there exists $K > 0$ such that for all $x > K$ it holds $\log(\omega(x))/x < 1/2$, and hence $\omega(x) < e^{\frac{x}{2}}$, so $\int_0^\infty \omega(x) e^{- x} \dx < \infty$. 
Similarly, $\int_{-\infty}^0 \omega(x) e^{x} \dx < \infty$, so altogether we see that 
\begin{align}\label{eq-int}
\int_{\bR} \omega(x) e^{-|x|} < \infty. 
\end{align}
Recall that a quasi-L\'evy type measure is, in a sense, the difference of two L\'evy-measures, i.e. a ``signed L\'evy measure'', see the comment in the introduction on the formal definition given by Lindner et al. \cite[Sect. 2]{lindner2018}.
Following their notation, for a quasi-L\'evy type measure $\nu$ we denote by $\nu^+$, $\nu^-$ and $|\nu|$ the positive part, the negative part and the total variation of $\nu$, respectively (when restricted to $\cB(\{x \in \bR^d: |x| > r\})$ for some $r>0$, then $\nu^+, \nu^-$ and $|\nu|$ coincide with the corresponding quantities of the finite signed measure $\nu$ restricted to this $\sigma$-algebra; see \cite[Sect. 2]{lindner2018} for details). 
A mapping $\nu$ is called a \emph{complex quasi-L\'evy type measure} if $\nu = \nu_1 + \ii \nu_2$ with two quasi-L\'evy type measures $\nu_1$ and $\nu_2$.
Finally, by $\textrm{sgn}$ we denote the sign function on $\bR$, i.e. $\textrm{sgn}(x) = x/|x|$ if $x \neq 0$ and $\textrm{sgn}(0) = 0$. 

\begin{theorem} \thlabel{hauptthm}
Let $F: \bR \to \bC$, $z \mapsto F(z) = \sum_{y \in A} a_y e^{\ii zy} + \int_{\bR} f(x)e^{\ii zx} \dx$
where $\emptyset \neq A \subset \bR$ is discrete, $(a_y)_{y \in A} \subset \bC$ is a sequence of coefficients such that $0 < \sum_{y \in A}|a_y| < \infty$ and $f \in L^1(\bR;\bC)$.
Then the following statements are equivalent. 
\begin{enumerate}
\item[\textnormal{(i)}]
	$\inf_{z \in \bR} |F(z)| > 0$.
\item[\textnormal{(ii)}]
	$F(z) \neq 0$ for all $z \in \bR$ and $\inf_{z \in \bR} \left| \sum_{y \in A} a_y e^{\ii z y} \right| > 0$.
\item[\textnormal{(iii)}]
	There exist a discrete set $B \subset \bR$, a sequence $(b_y)_{y \in B} \subset \bC$ with $\sum_{y \in B} |b_y| < \infty$ and a function $g \in L^1(\bR; \bC)$ such that the function $G: \bR \to \bC$ defined by 
\begin{align*}
	G(z)
	\coloneqq \sum_{y \in B} b_y e^{\ii zy} + \int_{\bR} g(x) e^{\ii zx} \dx
	\quad \textrm{ for all } z \in \bR
\end{align*}
satisfies $F(z) G(z) = 1$ for all $z \in \bR$.
\item[\textnormal{(iv)}]
	$F$ admits the representation
\begin{align*}
F(z)
= \exp \left(
	\ii \gamma z
	+ \sum_{y \in C} c_y e^{\ii zy}
	+ \int_{\bR} h(x) e^{\ii zx} \dx
	+ m \int_{\bR } \frac{e^{-|x|}}{|x|} \sgn(x)
	(e^{\ii zx}-1) \dx
\right)
\end{align*}
for all $z \in \bR$ with some $\gamma \in \bC$, a discrete set $C \subset \bR$, a sequence $(c_y)_{y \in C} \subset \bC$ such that $\sum_{y \in C}|c_y| < \infty$, $h \in L^1(\bR; \bC)$ and $m\in \bZ$.
\item[\textnormal{(v)}]
	$F$ admits the representation
\begin{align*}
F(z)
= \exp \left(
	\ii \gamma z
	- \frac{az^2}{2} 
	+ \int_{\bR} \left( 
		e^{\ii zx} -1 -\ii z x \1_{(-1,1)}(x)
	\right) \nu(\dx)
\right)
\end{align*}
for all $z \in \bR$ with some $a, \gamma \in \bC$ and a complex quasi-L\'evy type measure $\nu$ on $\bR$.
\end{enumerate}
If one, hence all, of these statements holds and additionally $f \in L_\omega^1(\bR;C)$ and $\sum_{y \in A} \omega(y)|a_y| < \infty$ for some weight $\omega$ on $\bR$ that satisfies the GRS-condition, then $g, h, (b_y)_{y \in B}$ and $(c_y)_{y \in C}$ change accordingly, in the sense that $g,h \in L_\omega^1(\bR; \bC)$, $\sum_{y \in B} \omega(y)|b_y| < \infty$ and $\sum_{y \in C} \omega(y)|c_y| < \infty$. 
\end{theorem}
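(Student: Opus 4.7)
The plan is to prove the cyclic chain (i)$\Rightarrow$(ii)$\Rightarrow$(iii)$\Rightarrow$(iv)$\Rightarrow$(v)$\Rightarrow$(i), and then revisit each step to verify that the weighted refinement propagates under the GRS condition. The warm-up equivalence (i)$\Leftrightarrow$(ii) proceeds by splitting $F = S + I$ with $S(z) = \sum_{y\in A} a_y e^{\ii zy}$ almost periodic and $I(z) = \int_\bR f(x)e^{\ii zx}\dx$ tending to $0$ at infinity by the Riemann--Lebesgue lemma. Then $|F(z)|\ge |S(z)|-|I(z)|$ for large $|z|$; if $\inf|S|=0$, almost periodicity produces a sequence $|z_n|\to\infty$ with $S(z_n)\to 0$, hence $F(z_n)\to 0$, contradicting (i), and continuity of $F$ on compacts handles the rest.

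The main analytic step is (ii)$\Rightarrow$(iii), essentially a Wiener--L\'evy inversion theorem for the commutative Banach algebra $\cA$ of functions of the form $\sum_y a_y e^{\ii zy} + \int f(x) e^{\ii zx} \dx$ equipped with the norm $\sum|a_y|+\|f\|_1$. I would identify the Gelfand spectrum of $\cA$ as the disjoint union of the point-evaluations $F\mapsto F(z_0)$ for $z_0\in\bR$ (corresponding to the no-zeros part of (ii)), and the Bohr-type characters $F\mapsto \sum_y a_y\chi(y)$ associated with characters $\chi$ of the subgroup of $\bR$ generated by $A$, which annihilate the $L^1(\bR)$ part (corresponding to the infimum of $|S|$). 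Condition (ii) is precisely the statement that $F$ lies outside every maximal ideal, so the Gelfand inversion theorem produces $G := 1/F\in\cA$, i.e.\ the representation in (iii).

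For (iii)$\Rightarrow$(iv), the bounded, zero-free function $F$ on $\bR$ admits a continuous logarithm, whose winding at infinity is an integer $m$; this winding is removed by factoring out $(\tfrac{1+\ii z}{1-\ii z})^m$ (which lies in $\cA$, being a polynomial in $\tfrac{1}{1-\ii z}=\widehat{e^{-x}\1_{[0,\infty)}}(z)$) and whose logarithm is exactly $m\cdot 2\ii\arctan(z) = m\int_\bR\tfrac{e^{-|x|}}{|x|}\sgn(x)(e^{\ii zx}-1)\dx$, while the mean motion of $\log S$ is absorbed into $\ii\gamma z$. The remainder is then a zero-free element of $\cA$ with zero winding, so applying holomorphic functional calculus to $\log$ on a neighborhood of its spectrum produces the logarithm inside $\cA$, giving (iv). The steps (iv)$\Rightarrow$(v)$\Rightarrow$(i) are algebraic bookkeeping: rewrite the exponent terms $\sum c_y e^{\ii zy}$ and $\int h(x) e^{\ii zx} \dx$ as $\int(e^{\ii zx}-1-\ii zx\1_{(-1,1)}(x))\,\nu(\dx)$ for a complex quasi-L\'evy type measure $\nu$ (with residual constants absorbed into $\gamma$ and $a$); conversely, use boundedness of $F$ (guaranteed by its original form) to deduce that $\Re\psi$ is bounded, so that $|F|=e^{\Re\psi}$ is bounded above and away from zero.

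For the weighted refinement, the GRS condition on $\omega$ is precisely Beurling's condition under which the weighted Wiener-type algebra $\cA_\omega$ normed by $\sum\omega(y)|a_y|+\int\omega(x)|f(x)|\dx$ has the same Gelfand spectrum as $\cA$; estimate \eqref{eq-int} further places the phase kernel $\tfrac{e^{-|x|}}{|x|}\sgn(x)\1_{\{|x|\ge 1\}}$ in $L^1_\omega(\bR;\bC)$, and the rational factor used in (iii)$\Rightarrow$(iv) lies in $\cA_\omega$ for the same reason. The Gelfand inversion and the functional calculus of the previous steps therefore carry through inside $\cA_\omega$, yielding $g,h\in L^1_\omega(\bR;\bC)$ and $\sum_{y\in B}\omega(y)|b_y|,\sum_{y\in C}\omega(y)|c_y|<\infty$. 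The main obstacle is the Wiener--L\'evy step (ii)$\Rightarrow$(iii): correctly identifying the spectrum of the mixed almost-periodic/$L^1$ algebra $\cA$, and confirming that condition (ii) is precisely the non-vanishing of $F$ on that spectrum, hence invertibility in $\cA$.
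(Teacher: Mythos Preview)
The serious gap is in your (v)$\Rightarrow$(i). You claim that boundedness of $F$ implies $\Re\psi$ is bounded, hence $|F|=e^{\Re\psi}$ is bounded away from zero. But $|F|\le M$ only yields $\Re\psi\le\log M$; a lower bound on $\Re\psi$ is exactly the statement $\inf|F|>0$ that you are trying to prove, so the argument is circular. Nothing in the form of the exponent in (v) forces its real part to be bounded below: it contains $-\tfrac{a}{2}z^2$ with $a\in\bC$ and an integral against a complex quasi-L\'evy type measure. In the paper this implication is the longest part of the proof and is not bookkeeping. One first observes that for each fixed $\tau$ the quotient $\psi_\tau(z)=F(z+\tau)F(z-\tau)/F(z)^2$ is bounded in $z$, because second differences of the L\'evy--Khintchine exponent annihilate the $\ii\gamma z$ and $-\tfrac{a}{2}z^2$ terms and leave a uniformly bounded integral. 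Then, assuming $\inf_{z}|F(z)|=0$, one runs a Khartov-type compactness argument on translates of the almost-periodic part $F_{\mathrm d}$ to produce a sequence $|z_n|\to\infty$ with $F(z_n)\to 0$ but with the numerator $F(z_n+\tau)F(z_n-\tau)$ bounded away from zero for a suitable $\tau$, contradicting the boundedness of $\psi_\tau$. The construction of this sequence uses almost-periodicity in an essential way (one must also arrange $F_{\mathrm d}(-2z_n)\neq 0$) and cannot be replaced by the one-line deduction you propose.

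Your Gelfand-theoretic route (ii)$\Rightarrow$(iii) is a genuine and workable alternative to the paper's argument; the paper never forms the mixed algebra $\cA$ but instead factors $F(z)=S(z)\bigl(1+\int_\bR\varphi(x)e^{\ii zx}\,\dx\bigr)$ by inverting $S$ in the weighted discrete Wiener algebra (Balan--Krishtal), and then passes directly from (ii) to (iv) by taking logarithms of the two factors separately. One related caution for your (iii)$\Rightarrow$(iv): the ``winding at infinity'' of $\log F$ is not well-defined, since $F$ has no limit as $|z|\to\infty$ (the almost-periodic part persists). The integer $m$ is the index of the $L^1$-type factor $1+\int_\bR\varphi(x)e^{\ii zx}\,\dx$, which only becomes visible after the discrete part has been split off; the mean-motion extraction from $S$ must therefore precede the winding correction, not follow it as your sketch suggests.
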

We give the proof of \thref{hauptthm} in Section 3, since it needs some preparations. 
From this theorem, we can easily get the following. 

\begin{theorem} \thlabel{qid}
Let $\mu$ be a probability distribution on $\bR$ of the form $\mu = p \mu_\textnormal{d} + (1-p) \mu_\textnormal{ac}$ with $0<p\leq 1$, a discrete distribution $\mu_\textnormal{d}$ and an absolutely continuous distribution $\mu_\textnormal{ac}$. 
The following statements are equivalent.
\begin{enumerate}
\item[\textnormal{(i)}]
$\mu$ is quasi-infinitely divisible.
\item[\textnormal{(ii)}]
$\inf_{z \in \bR} |\widehat{\mu}(z)| > 0$.
\item[\textnormal{(iii)}]
$\widehat{\mu}(z) \neq 0$ for all $z \in \bR$ and $\inf_{z \in \bR} |\widehat{\mu}_{\textnormal{d}}(z)| > 0$.
\end{enumerate}
If these statements are satisfied, then $\mu$ has Gaussian variance 0, finite drift $\gamma \in \bR$ and quasi-L\'evy measure $\nu$ given by
\begin{align*}
\nu(\dx) = \sum_{y \in C} c_y \delta_y + \left(
	h(x) + \frac{me^{-|x|}}{|x|} \sgn(x)
\right) \dx
\end{align*}
with a discrete set $C \subset \bR$, an absolutely summable sequence $(c_y)_{y \in C} \subset \bR$, $m \in \bZ$ and $h \in L^1(\bR; \bR)$.
\end{theorem}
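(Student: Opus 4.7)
The strategy is to invoke \thref{hauptthm} with $F = \widehat{\mu}$. Writing
\begin{align*}
\widehat{\mu}(z) = \sum_{y \in A}(p a_y) e^{\ii zy} + \int_\bR (1-p) f(x) e^{\ii zx}\dx,
\end{align*}
we verify the hypotheses of \thref{hauptthm}: $\sum_{y\in A}|p a_y| = p \in (0,1]$ is finite and nonzero, and $(1-p) f \in L^1(\bR;\bC)$. The equivalence (ii)$\Leftrightarrow$(iii) of \thref{qid} is then immediate from (i)$\Leftrightarrow$(ii) of \thref{hauptthm}, since $p > 0$ makes $\inf_{z \in \bR}|\sum_{y \in A}(p a_y) e^{\ii zy}| > 0$ equivalent to $\inf_{z \in \bR}|\widehat{\mu_\textnormal{d}}(z)| > 0$.

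For (i)$\Rightarrow$(ii), if $\mu$ is quasi-infinitely divisible, its classical L\'evy--Khintchine representation provides a particular instance of (v) in \thref{hauptthm} with real $\gamma$, $a \geq 0$ and real quasi-L\'evy measure $\nu$ (viewed as complex); by (v)$\Rightarrow$(i) of \thref{hauptthm}, $\inf_{z\in\bR}|\widehat{\mu}(z)| > 0$, giving (ii).

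For the converse (ii)$\Rightarrow$(i) together with the explicit form of $\nu$, I would apply (iv) of \thref{hauptthm} to obtain
\begin{align*}
\widehat{\mu}(z) = \exp\left(\ii \gamma z + \sum_{y\in C} c_y e^{\ii zy} + \int_\bR h(x) e^{\ii zx}\dx + m\int_\bR \frac{e^{-|x|}\sgn(x)}{|x|}(e^{\ii zx}-1)\dx\right).
\end{align*}
Setting $z = 0$ (using the distinguished logarithm of $\widehat{\mu}$, available since $\widehat{\mu}$ has no zeros by (iii)) yields $\sum_{y\in C} c_y + \int_\bR h(x)\dx = 0$, so the exponent rewrites as $\ii \gamma z + \int_\bR (e^{\ii zx}-1)\nu(\dx)$ with $\nu$ exactly of the claimed form. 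A short estimate gives $\int_{(-1,1)} |x|\, d|\nu| \leq |m|\int_{-1}^{1} e^{-|x|}\dx + \sum_{y \in C}|c_y| + \|h\|_{L^1} < \infty$, so $\nu$ has finite first moment near $0$, and the displayed expression is a bona fide L\'evy--Khintchine representation with Gaussian variance $0$ and finite drift $\gamma$.

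The main obstacle is showing that the parameters $\gamma, (c_y)_{y \in C}, h$ from (iv) are actually real, so that $\nu$ is a real quasi-L\'evy measure and $\mu$ is genuinely quasi-infinitely divisible. For this I would exploit the Hermitian identity $\widehat{\mu}(-z) = \overline{\widehat{\mu}(z)}$: taking the distinguished logarithm of both sides and invoking the uniqueness expected from the proof of \thref{hauptthm} (a Fourier-uniqueness statement separating the discrete, absolutely continuous and singular contributions), the conjugate triplet $(\bar\gamma, (\overline{c_y}), \bar h, m)$ provides another representation of the same logarithm, forcing $\gamma \in \bR$, $c_y \in \bR$, and $h$ real-valued almost everywhere. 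With reality secured, the positive and negative parts of $\nu$ each satisfy $\int (x^2 \wedge 1)\, d\nu^\pm < \infty$, so $\nu$ is a valid real quasi-L\'evy measure, concluding the proof.
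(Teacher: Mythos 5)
Your overall route is the same as the paper's: apply \thref{hauptthm} to $F=\widehat{\mu}$ (after absorbing the factors $p$ and $1-p$ into the coefficients and the density), read off the equivalences, and then argue that the a priori complex parameters are in fact real. The reductions (ii)$\Leftrightarrow$(iii), (i)$\Rightarrow$(ii) via statement (v), and the rewriting of the exponent in (iv) as $\ii\gamma z+\int_\bR(e^{\ii zx}-1)\,\nu(\di x)$ after evaluating at $z=0$, together with your integrability estimate near the origin, are all correct and match what the paper does implicitly.

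The one place where your argument has a gap is the reality step. You appeal to ``the uniqueness expected from the proof of \thref{hauptthm}'' for the decomposition in (iv) into a linear part, an almost periodic part, a Fourier transform of an $L^1$ function, and the index term. No such uniqueness statement is proved (or even stated) in the paper, and it is not entirely trivial: one has to separate the almost periodic component from the $C_0$ component and from the index term, e.g.\ via Bohr mean values and the asymptotics of $m\log\bigl(\tfrac{\ii-z}{\ii+z}\bigr)$, and then invoke uniqueness of Fourier coefficients of almost periodic functions and injectivity of the Fourier transform on $L^1$. The paper sidesteps this entirely by passing to representation (v) and citing \cite[Thm.~3.2]{berger2018}, which asserts that the (complex) triplet $(a,\gamma,\nu)$ in a L\'evy--Khintchine-type representation of a \emph{probability} distribution's characteristic function is unique and necessarily real. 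Your Hermitian-symmetry idea $\widehat{\mu}(-z)=\overline{\widehat{\mu}(z)}$ is exactly the right mechanism, but to make it rigorous without extra work you should apply it at the level of (v), where the required uniqueness is an already established result, rather than at the level of (iv).
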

\begin{proof}
This follows from the equivalence of (i), (ii), (iv) and (v) in \thref{hauptthm} for $F = \widehat{\mu}$ together with \cite[Thm. 3.2]{berger2018}, according to which the potentially complex valued quantities $a, \gamma$ and $\nu$ appearing in the L\'evy-Khintchine type representation of the characteristic function of a probability distribution must, in fact, be real. 
\end{proof} 

Similar to \cite[Cor. 8.3]{lindner2018} and \cite[Cor. 4.9]{berger2018} we can use this to show that every factor of a quasi-infinitely divisible distribution given as in \eqref{eq-mu} is quasi-infinitely divisible itself. 

\begin{corollary}
Let $\mu = p \mu_\textnormal{d} + (1-p) \mu_\textnormal{ac}$ with $0<p\leq 1$, a discrete distribution $\mu_\textnormal{d}$ and an absolutely continuous distribution $\mu_\textnormal{ac}$ and suppose that $\mu_1$ and $\mu_2$ are distributions such that $\mu = \mu_1 \ast \mu_2$. 
Then $\mu$ is quasi-infinitely divisible if and only if both $\mu_1$ and $\mu_2$ are quasi-infinitely divisible. 
\end{corollary}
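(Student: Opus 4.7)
The plan is to prove the two directions separately, with the ``only if'' direction being the substantial one. The ``if'' direction is the standard observation that quasi-infinite divisibility is preserved under convolution: if $\widehat{\mu_i} = \exp(\psi_i)$ with $\psi_i$ a L\'evy-Khintchine exponent with quasi-L\'evy measure $\nu_i$, then $\widehat{\mu_1 \ast \mu_2} = \exp(\psi_1 + \psi_2)$ and $\nu_1 + \nu_2$ is again a quasi-L\'evy type measure. I would state this in a single sentence.

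For the ``only if'' direction, assume $\mu$ is quasi-infinitely divisible. By \thref{qid} we have $c \coloneqq \inf_{z\in\bR}|\widehat{\mu}(z)| > 0$. Since $|\widehat{\mu_i}(z)| \leq 1$ and $\widehat{\mu}=\widehat{\mu_1}\widehat{\mu_2}$, this immediately gives $\inf_{z \in \bR}|\widehat{\mu_i}(z)| \geq c > 0$ for $i=1,2$. To then invoke \thref{qid} on each $\mu_i$, I need to know that $\mu_i$ admits the decomposition $\mu_i = p_i \mu_{\textnormal{d},i} + (1-p_i)\mu_{\textnormal{ac},i}$ with $0 < p_i \leq 1$.

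To establish this, I would take the full Lebesgue decomposition $\mu_i = q_i \mu_{\textnormal{d},i} + r_i \mu_{\textnormal{ac},i} + s_i \mu_{\textnormal{sc},i}$ with $q_i+r_i+s_i=1$, all summands positive and the subscripted measures being probability measures (with the convention that terms with coefficient $0$ are omitted). Expanding $\mu_1 \ast \mu_2$ into its nine cross-terms, I would first note that the only discrete contribution is $q_1 q_2 \mu_{\textnormal{d},1} \ast \mu_{\textnormal{d},2}$, so $p = q_1 q_2 > 0$, and hence $q_i > 0$. Next, the terms $q_1 s_2 \mu_{\textnormal{d},1} \ast \mu_{\textnormal{sc},2}$ and $s_1 q_2 \mu_{\textnormal{sc},1} \ast \mu_{\textnormal{d},2}$ are purely singular continuous, because convolution of a singular continuous measure with a discrete measure is a countable sum of translates of a singular continuous measure and thus singular continuous. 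Since all the other terms (those involving at least one absolutely continuous factor) are absolutely continuous, and since $\mu$ has no singular continuous part by assumption, the non-negativity of these singular continuous contributions forces $q_1 s_2 = s_1 q_2 = 0$. As $q_1, q_2 > 0$, this yields $s_1 = s_2 = 0$, i.e.\ $\mu_i = q_i \mu_{\textnormal{d},i} + (1-q_i) \mu_{\textnormal{ac},i}$ with $0 < q_i \leq 1$.

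With $\mu_i$ in this form and $\inf_{z \in \bR}|\widehat{\mu_i}(z)| > 0$, \thref{qid} immediately gives that each $\mu_i$ is quasi-infinitely divisible. The main obstacle is the Lebesgue decomposition step: one must carefully argue that the singular continuous contributions arising from cross-terms cannot cancel (they are non-negative measures) and that convolution of singular continuous with discrete remains singular continuous; the remaining bookkeeping is routine.
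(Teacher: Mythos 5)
Your proof is correct and follows essentially the same route as the paper's: Lebesgue-decompose the factors, use $p\mu_{\mathrm{d}}\neq 0$ to conclude the discrete parts are nonzero, eliminate the continuous singular parts via the non-negative cross-terms $\mu_{i,\mathrm{cs}}\ast\mu_{j,\mathrm{d}}$, and then apply \thref{qid} together with the bound $|\widehat{\mu_i}(z)|\leq 1$ (your handling of the ``if'' direction by general closure of quasi-infinite divisibility under convolution is an equally valid cosmetic variant). The only slip is the parenthetical claim that all remaining cross-terms are absolutely continuous --- $\mu_{\mathrm{sc},1}\ast\mu_{\mathrm{sc},2}$ need not be --- but this is harmless, since your actual argument only uses that each non-negative purely singular continuous summand is dominated by $\mu$, which has no singular continuous part.
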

\begin{proof}
For $i \in \{1,2\}$ denote by 
$\mu_i 
= \mu_{i, \textnormal{d}}
	+ \mu_{i, \textrm{cs}}
	+ \mu_{i, \textnormal{ac}}$
the Lebesgue decomposition of $\mu_i$ into its discrete, continuous singular and absolutely continuous part. 
Since $p\mu_\textnormal{d} \neq 0$, we conclude $\mu_{1, \textnormal{d}} \neq 0$ and $\mu_{2, \textnormal{d}} \neq 0$ and since $\mu_{i, \textnormal{cs}} \ast \mu_{j, \textnormal{d}}$ is continuous singular for $i \neq j$ but $\mu$ has no continuous singular part, we conclude $\mu_{1, \textnormal{cs}} = \mu_{2, \textnormal{cs}} = 0$. 
Hence, $\mu_1$ and $\mu_2$ are of the form \eqref{eq-mu}. 
Since the characteristic functions $\widehat{\mu}_1$ and $\widehat{\mu}_2$ are bounded, we see that $\widehat{\mu}$ is bounded away from zero if and only if $\widehat{\mu}_1$ and $\widehat{\mu}_2$ are bounded away from zero, so that an application of \thref{qid} gives the claim. 
\end{proof}

Given an infinitely divisible distribution $\mu$ on $\bR$ with characteristic triplet $(a, \gamma, \nu)$ it is already known that $\mu$ is continuous if and only if $a \neq 0$ or $|\nu|(\bR) = \infty$, see e.g. \cite[Thm. 27.4]{sato2013}. 
In \cite[Open Question 7.2]{lindner2018} the question was raised whether this continues to hold if $\mu$ is only quasi-infinitely divisible, and this question was answered by Berger \cite[Ex. 4.6]{berger2018} in the negative. 
There it was shown that the distribution $\mu = \frac{1}{1000} \delta_0 + \frac{999}{1000} \rho$ with normal distribution $\rho = \cN(1,1)$ is quasi-infinitely divisible with $\nu^+(\bR) = \nu^-(\bR) = \infty$. 
From \thref{qid} we can see that although the quasi-L\'evy measure can be infinite for non-continuous distributions, it cannot be ``too far away from finiteness'', at least if there is no continuous singular part present:

\begin{corollary}
Let $\mu = p \mu_\textnormal{d} + (1-p) \mu_\textnormal{ac}$ be quasi-infinitely divisible, where $0 \leq p \leq 1$, $\mu_\textnormal{d}$ is a discrete distribution and $\mu_\textnormal{ac}$ an absolutely continuous distribution. 
Let $\nu$ be the quasi-L\'evy measure of $\mu$. 
If $\int_{(-1,1)} |x|^\beta |\nu|(\dx) = \infty$ for some $\beta > 0$, then $p = 0$, i.e. $\mu$ is absolutely continuous. 
\end{corollary}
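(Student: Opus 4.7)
The plan is to argue by contrapositive: I assume $p > 0$ and show that $\int_{(-1,1)} |x|^\beta |\nu|(\dx)$ is finite for every $\beta > 0$. Under this assumption $\mu$ has a nontrivial discrete part, so \thref{qid} applies and delivers the explicit structural representation
\[
\nu(\dx) = \sum_{y \in C} c_y \delta_y + \left(h(x) + \frac{m e^{-|x|}}{|x|}\sgn(x)\right)\dx
\]
for the quasi-L\'evy measure, with an absolutely summable sequence $(c_y)_{y \in C} \subset \bR$, a function $h \in L^1(\bR; \bR)$, and some $m \in \bZ$.

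Next, I would bound $|\nu|$ on $(-1,1)$ by the triangle inequality for total variation, which lets me split the integral $\int_{(-1,1)} |x|^\beta |\nu|(\dx)$ into three pieces corresponding to the discrete part, the $h$-part, and the $e^{-|x|}/|x|$-part, and estimate each separately. The discrete contribution is at most $\sum_{y \in C \cap (-1,1)} |c_y||y|^\beta \le \sum_{y \in C} |c_y| < \infty$, using $|y|^\beta \le 1$ on $(-1,1)$. The $h$-contribution is bounded by $\int_{(-1,1)} |x|^\beta |h(x)|\dx \le \|h\|_{L^1} < \infty$. Finally, the singular piece equals $|m|\int_{(-1,1)} |x|^{\beta-1} e^{-|x|}\dx$, which is finite precisely because $\beta > 0$ makes $|x|^{\beta-1}$ integrable near the origin. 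Summing the three bounds completes the contrapositive.

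The corollary is essentially a direct structural consequence of \thref{qid}, so there is no serious technical obstacle. The only point demanding a moment's attention is recognising that the exponential tilt term $e^{-|x|}/|x|$ appearing in the explicit form of $\nu$ has a singularity at zero of order exactly $|x|^{-1}$ — which is precisely the strength that is absorbed by any strictly positive power $|x|^\beta$. In other words, the corollary is sharp in the sense that the $1/|x|$-singularity of $\nu$ furnished by \thref{qid} is the worst that can occur when a discrete component is present.
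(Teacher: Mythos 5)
Your proposal is correct and follows exactly the paper's own route: assume $p>0$, invoke \thref{qid} to obtain the explicit form of $\nu$, and check directly that $\int_{(-1,1)}|x|^\beta\,|\nu|(\dx)<\infty$. The paper leaves the final integrability check as "easily seen"; your three-way split (atoms, $h$-part, $|x|^{-1}e^{-|x|}$-part) is just the detailed version of that same step.
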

\begin{proof}
If $p>0$, then by \thref{qid} we have
\begin{align*}
\nu(\dx) = \sum_{y \in C} c_y \delta_y + \left(
	h(x) + \frac{me^{-|x|}}{|x|} \sgn(x)
\right) \dx
\end{align*}
with a discrete set $C \subset \bR$, an absolutely summable sequence $(c_y)_{y \in C} \subset \bR$, $m \in \bZ$ and $h \in L^1(\bR; \bR)$.
Now it can be easily seen that $\int_{(-1,1)} |x|^\beta |\nu|(\dx) < \infty$ for every $\beta > 0$. 
\end{proof}

Let $H: \bR \to [0, \infty)$ be a locally bounded, measurable submultiplicative function, i.e. suppose that there exists a constant $B > 0$ such that $H(x+y) \leq B H(x) H(y)$ for all $x, y \in \bR$, and suppose that $H$ satisfies the GRS-condition. 
The GRS-condition implies the existence of $x_0 \in \bR$ such that $H(x_0) > 0$, hence $0 < H(x_0) \leq B H(x) H(x_0-x)$ so that $H$ must be strictly positive. 
We say that a measure $\mu$ on $\bR$ has finite $H$-moment if $\int_\bR H(x) \mu(\dx) < \infty$. 
If a distribution $\mu$ given as in \eqref{eq-mu} is quasi-infinitely divisible, then using \thref{hauptthm} we can characterize finiteness of the $H$-moment of $\mu$ in terms of its quasi-L\'evy measure. 

\begin{theorem}
Let $\mu = p \mu_\textnormal{d} + (1-p) \mu_\textnormal{ac}$ with $0<p\leq 1$, a discrete distribution $\mu_\textnormal{d}$ and an absolutely continuous distribution $\mu_\textnormal{ac}$ such that $\mu$ is quasi-infinitely divisible and denote by $\nu$ its quasi-L\'evy measure. 
Let further $H: \bR \to [0, \infty)$ be a locally bounded, measurable submultiplicative function that satisfies the GRS-condition. 
Then the following are equivalent:
\begin{enumerate}
\item[\textnormal{(i)}]
$\mu$ has finite $H$-moment.
\item[\textnormal{(ii)}]
$\nu^+$ has finite $H$-moment.
\item[\textnormal{(iii)}]
$|\nu|$ has finite $H$-moment.
\end{enumerate}
\end{theorem}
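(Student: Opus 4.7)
The plan is to pass to a suitable weight $\omega$ derived from $H$ and then exploit Theorem~\ref{hauptthm} together with the classical Sato characterization of $H$-moments for infinitely divisible distributions. Since $H$ is submultiplicative, say $H(x+y) \leq B H(x) H(y)$ for some $B>0$, I will set $\omega(x) := \max(1, B H(x))$. A short case distinction shows that $\omega$ is a weight on $\bR$ (submultiplicative with constant $1$), satisfies the GRS-condition, and obeys $H \leq \omega/B$ and $\omega \leq 1 + BH$. Hence, for any finite positive measure, as well as for the tail of a (quasi-)L\'evy measure, finite $H$-moment and finite $\omega$-moment are equivalent, and I can work throughout with $\omega$. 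In particular, $\mu$ has finite $\omega$-moment iff $\sum_{y \in A} \omega(y) a_y < \infty$ and $f \in L^1_\omega(\bR; \bR)$.

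For (i)~$\Rightarrow$~(iii), I will apply the ``additionally'' clause of Theorem~\ref{hauptthm} to $F = \widehat{\mu}$, which is bounded away from zero by Theorem~\ref{qid}. This yields $\sum_{y \in C} \omega(y) |c_y| < \infty$ and $h \in L^1_\omega$ in the representation~(iv). Using the explicit form of $\nu$ from Theorem~\ref{qid} together with the integrability~(\ref{eq-int}), one estimates
\begin{align*}
\int_{|x| > 1} \omega(x)\, d|\nu|(x)
\leq \sum_{y \in C} \omega(y) |c_y|
+ \int_{\bR} \omega(x) |h(x)|\, \dx
+ |m| \int_{|x| > 1} \frac{\omega(x) e^{-|x|}}{|x|}\, \dx,
\end{align*}
which is finite, giving $|\nu|$ finite $H$-moment. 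Conversely, (iii)~$\Rightarrow$~(i) follows from the same representation: if $|\nu|$ has finite $\omega$-moment, then the finite signed measure $\sigma := \sum_{y\in C}c_y \delta_y + h(x)\, \dx$ has finite $\omega$-moment, so by submultiplicativity of $\omega$ its convolution exponential $\exp_{\ast}(\sigma) = \sum_{n \geq 0}\sigma^{\ast n}/n!$ has $\omega$-moment bounded by $\exp(\int \omega\, d|\sigma|)$, while the factor $\exp(m J(z)) = ((1+\ii z)/(1-\ii z))^m$ in~(iv) is the Fourier transform of an explicit finite signed measure whose $\omega$-moment is finite by~(\ref{eq-int}). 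Multiplying the three factors in~(iv) and invoking Fourier uniqueness identifies $\mu$ with a convolution of signed measures of finite $\omega$-moment.

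The implication (iii)~$\Rightarrow$~(ii) is immediate since $\nu^+ \leq |\nu|$. For (ii)~$\Rightarrow$~(iii), let $\mu^\pm$ denote the infinitely divisible distribution with characteristic triplet $(0, 0, \nu^\pm)$. By~\cite[Thm.~25.3]{sato2013} the finite $H$-moment of $\nu^+$ gives $\int_\bR H\, d\mu^+ < \infty$; from the L\'evy-Khintchine exponent of $\mu$ one has $\widehat{\mu}(z)\, \widehat{\mu^-}(z) = e^{\ii C z}\, \widehat{\mu^+}(z)$ for some $C \in \bR$, so $\mu \ast \mu^- = \delta_C \ast \mu^+$ has finite $H$-moment by submultiplicativity. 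Here the assumption $p > 0$ is crucial: since $\mu_\textnormal{d}$ is a probability measure there exists $y_0 \in A$ with $p a_{y_0} > 0$, so $\mu \geq p a_{y_0} \delta_{y_0}$ and thus
\begin{align*}
\mu \ast \mu^- \geq p a_{y_0}\, \delta_{y_0} \ast \mu^-.
\end{align*}
Combining with the reverse submultiplicativity bound $H(x) \leq B H(-y_0) H(x + y_0)$, I deduce $\int_\bR H\, d\mu^- < \infty$, and applying~\cite[Thm.~25.3]{sato2013} once more to the infinitely divisible $\mu^-$ gives that $\nu^-$ has finite $H$-moment; therefore so does $|\nu| = \nu^+ + \nu^-$.

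The main obstacle is this last direction (ii)~$\Rightarrow$~(iii), which is known to fail for general quasi-infinitely divisible distributions; it becomes tractable here precisely because the assumption $p > 0$ forces $\mu$ to carry an atom, which permits the deconvolution of $\mu \ast \mu^-$ against a translated copy of $\mu^-$ and thereby transfers the $H$-moment control from $\mu^+$ to $\mu^-$.
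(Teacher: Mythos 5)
Your proof is correct, and for the one genuinely hard direction it follows the same route as the paper: the implication (i)~$\Rightarrow$~(iii) is obtained by turning $H$ into a weight satisfying the GRS-condition and invoking the ``additionally'' clause of \thref{hauptthm} together with the explicit form of $\nu$ from \thref{qid} and \eqref{eq-int}. (The paper normalizes $B=1$ and observes that the GRS-condition then forces $H\geq 1$, so that $H$ itself is a weight; your $\omega=\max(1,BH)$ achieves the same thing.) Where you diverge is that you reprove from scratch the remaining implications, which the paper simply quotes from \cite[Thm.~6.2]{lindner2018}: your convolution-exponential argument for (iii)~$\Rightarrow$~(i) and your factorization $\mu\ast\mu^-=\delta_C\ast\mu^+$ for (ii)~$\Rightarrow$~(iii) are both valid and are essentially the standard proofs of that cited theorem. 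This self-containedness is a mild plus, at the cost of length.

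One conceptual correction to your closing remarks: the equivalence (ii)~$\Leftrightarrow$~(iii) and the implication (ii)~$\Rightarrow$~(i) hold for \emph{every} quasi-infinitely divisible distribution (even without the GRS-condition); this is exactly the content of \cite[Thm.~6.2]{lindner2018}. In particular the atom of $\mu$ is not needed in your (ii)~$\Rightarrow$~(iii) step: from $\int_{\bR}\int_{\bR}H(x+y)\,\mu^-(\di x)\,\mu(\di y)<\infty$ one picks, by Fubini, any $y_0$ with $\int_{\bR}H(x+y_0)\,\mu^-(\di x)<\infty$ and concludes as you do, without $\mu$ having a point mass. What fails for general quasi-infinitely divisible distributions, and what the counterexamples in \cite[Ex.~6.3]{lindner2018} and \cite[Ex.~8.2~(a)]{berger2022-1} exhibit, is the implication (i)~$\Rightarrow$~(ii); that is the direction for which the structure $\mu=p\mu_\textnormal{d}+(1-p)\mu_\textnormal{ac}$ and the weighted Wiener--L\'evy machinery of \thref{hauptthm} are genuinely needed, and it is the only direction the paper proves by hand. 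So your proof stands, but the ``main obstacle'' is located in the wrong implication.
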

\begin{proof}
The equivalence of (ii) and (iii) as well as the implication (ii) $\Rightarrow$ (i) are given in Lindner et al. \cite[Thm. 6.2]{lindner2018}. 
For the proof that (i) implies (iii), let $B > 0$ be such that $H(x+y) \leq B H(x) H(y)$ for all $x, y \in \bR$. 
Without loss of generality we can assume that $B = 1$, since otherwise we can replace $H$ by $BH$. 
Now, the GRS-condition implies that $H(x) \geq 1$ for $x \in \bR \setminus\{0\}$, since $\log H(nx) \leq \log (H(x)^n) = n \log H(x)$ for all $n \in \bN$, and therefore $\log H(x) \geq \lim_{n \to \infty} n^{-1} \log H(nx) = 0$. 
Further, trivially $H(0) \geq 1$, so $H$ is a weight. 
Recall that, by \eqref{eq-int} the mapping $\bR \to \bR, x \mapsto \frac{me^{-|x|}}{|x|} \sgn(x)$ is in $L_\omega^1(\bR; \bC)$ for every weight function $\omega$, so that the claim follows immediately from \thref{hauptthm}. 
\end{proof}

\section{Proof of Theorem \ref{hauptthm}}
In order to prove \thref{hauptthm}, we first need some auxiliary results. 
We start with a weighted version of the Wiener-L\'evy theorem given by Krein \cite[Thm. W]{krein1962}. 
Throughout this chapter let $\omega: \bR \to [0, \infty)$ be a weight function that satisfies the GRS-condition. 
\begin{lemma} \thlabel{la-1}
Let $F: \bR \to \bC$, $z \mapsto q + \int_{\bR}e^{\ii zx} f(x) \dx$, where $q \in \bC$ and $f \in L_\omega^1(\bR; \bC)$.
Let further $\varphi: \Omega \to \bC$ be a holomorphic function on an open subset $\Omega \subset \bC$ which contains the closure $\overline{\cR(F)}$ of the range of $F$. 
Then there exist $r \in \bC$ and $g \in L_\omega^1(\bR; \bC)$ such that
\begin{align*}
\varphi(F(z))
= r + \int_{\bR}e^{\ii zx} g(x) \dx
\quad \textrm{for all } z \in \bR.
\end{align*}
\end{lemma}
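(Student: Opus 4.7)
The plan is to recast the statement as an assertion of the holomorphic functional calculus in a weighted Beurling algebra with unit adjoined. First I would introduce the commutative unital Banach algebra $\cA_\omega \coloneqq \bC \mathbf{1} \oplus L_\omega^1(\bR; \bC)$, equipped with convolution as multiplication and norm $\|q\mathbf{1} + f\| \coloneqq |q| + \int_{\bR}\omega(x)|f(x)|\dx$; submultiplicativity of $\omega$ is precisely what makes this into a Banach algebra. The function $F$ from the statement is then the image of the element $T \coloneqq q\mathbf{1} + f \in \cA_\omega$ under the character $\chi_z \colon q\mathbf{1} + f \mapsto q + \int_{\bR} e^{\ii zx}f(x)\dx$, parametrised by $z \in \bR$.

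Next I would identify the Gelfand spectrum of $T$. The classical Beurling--Gelfand description, which uses the GRS-condition on $\omega$ in an essential way, states that every character of $L_\omega^1(\bR;\bC)$ is of the form $f \mapsto \int_{\bR} e^{\ii z x} f(x)\dx$ for some $z \in \bR$. Consequently, the character space of the unitisation $\cA_\omega$ is $\bR \cup \{\infty\}$, where the character at infinity sends $q\mathbf{1} + f$ to $q$. Since $L_\omega^1(\bR;\bC) \subset L^1(\bR;\bC)$, the Riemann--Lebesgue lemma yields $F(z) \to q$ as $|z| \to \infty$, so $q \in \overline{\cR(F)}$ and the Gelfand spectrum of $T$ coincides with $\overline{\cR(F)}$, which by hypothesis is contained in $\Omega$.

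Then I would apply the holomorphic functional calculus for commutative unital Banach algebras: since $\varphi$ is holomorphic on the open set $\Omega \supset \sigma(T)$, the element $\varphi(T)$ is well-defined in $\cA_\omega$, and for every character $\chi$ on $\cA_\omega$ one has $\chi(\varphi(T)) = \varphi(\chi(T))$. Writing $\varphi(T) = r\mathbf{1} + g$ with $r \in \bC$ and $g \in L_\omega^1(\bR;\bC)$, and applying the identity above to $\chi = \chi_z$, yields exactly the desired representation
\begin{equation*}
\varphi(F(z)) = r + \int_{\bR} e^{\ii zx} g(x)\dx \quad \text{for all } z \in \bR.
\end{equation*}

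The only non-trivial point is the description of the maximal ideal space of $L_\omega^1(\bR;\bC)$ under the GRS-condition; this is a classical result of Beurling and Gelfand and is precisely the content of Krein's weighted Wiener--Lévy theorem cited in the remark preceding the lemma. I would therefore simply invoke that theorem, or equivalently the Gelfand theory sketched above, which reduces the proof to the essentially formal steps described.
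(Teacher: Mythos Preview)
Your proposal is correct and takes essentially the same approach as the paper: both invoke the holomorphic functional calculus in the Beurling algebra $L_\omega^1(\bR;\bC)$, using the GRS-condition to identify its character space with $\bR$. The paper cites Gelfand--Raikov--Shilov directly (after translating by $q$ to reduce to a function $\psi$ with $\psi(0)=0$ in the non-unital algebra, rather than adjoining a unit as you do) and adds the minor observation that the continuity hypothesis on $\omega$ required there can be arranged by passing to an equivalent smooth weight; your unitisation is an equivalent packaging of the same argument.
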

\begin{proof}
By the Riemann-Lebesgue lemma  we have $\lim_{|z| \to \infty}F(z) = q$, so $q\in \overline{\cR(F)}$ and hence $q \in \Omega$. 
Applying \cite[Cor. 2, p.109]{gelfand1964} together with \cite[\S 18.1, p.115]{gelfand1964} to $\psi: \Omega - q = \{ \omega - q: \omega \in \Omega \} \to \bC$, $z \mapsto \varphi(z+q)-\varphi(q)$ shows that there exists $g \in L_\omega^1(\bR; \bC)$ such that $\psi(\int_{\bR} e^{\ii zx}f(x) \dx) = \int_{\bR} e^{\ii zx}g(x)\dx$ for $z \in \bR$, and hence $\varphi(F(z)) =  \varphi(q) + \int_{\bR} e^{\ii zx}g(x) \dx$, so we get the desired representation with $r \coloneqq \varphi(q)$. 
Observe that in \cite[Cor. 2]{gelfand1964} it is actually assumed that the weight function $\omega$ is continuous; 
this is, however, not a problem since by \cite[Lemma 2]{berger2021} we can always find a smooth weight function $\widetilde{\omega}$ such that
\begin{align*}
0 
< \inf_{z \in \bR} \frac{\omega(z)}{\widetilde{\omega}(z)}
\leq \sup_{z \in \bR} \frac{\omega(z)}{\widetilde{\omega}(z)}
< \infty. 
\end{align*}
\end{proof}

Let $F: \bR \to \bC$, $z \mapsto q + \int_\bR f(x)e^{\ii zx} \dx$ with $q \in \bC \setminus \{0\}$ and $f \in L^1(\bR; \bC)$ be such that $F(z) \neq 0$ for all $z \in \bR$, and denote by $g$ the distinguished logarithm of $\frac{F}{|F|}$ (see \cite[Lem. 7.6]{sato2013} for the definition of the distinguished logarithm). 
The \emph{index} of $F$ is then defined as $\textrm{ind}(F) \coloneqq \frac{1}{2\pi} \left( \lim_{z \to \infty} g(z) - \lim_{z \to -\infty} g(z) \right)$; for the well-definedness and the fact that $\textrm{ind}(F) \in \bZ$, see \cite[Rem. 4.3]{berger2018}.
For such functions of index $0$, Krein gives a variant of the Wiener-L\'evy Theorem, \cite[Thm. L, p.175]{krein1962}, for which we also obtain a weighted version.

\begin{lemma} \thlabel{la-2}
Let $q \in \bC \setminus\{0\}$ and $f \in L_\omega^1(\bR; \bC)$ be such that $F:\bR \to \bC$, $z \mapsto q + \int_{\bR}e^{\ii zx} f(x) \dx$ satisfies $F(z) \neq 0$ for $z \in \bR$ and has index 0.
Then there exist $r \in \bC$ and $g \in L_\omega^1(\bR; \bC)$ such that 
\begin{align*}
F(z)
= \exp\left(
	r + \int_{\bR} g(x) e^{\ii zx} \dx
\right)
\quad \textrm{for all } z \in \bR.
\end{align*} 
\end{lemma}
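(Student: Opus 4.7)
The plan is to apply the weighted Wiener--L\'evy theorem \thref{la-1} with $\varphi$ chosen to be a suitable single-valued holomorphic branch of the complex logarithm, and then to exponentiate both sides to recover $F$. The index-$0$ hypothesis is exactly what allows us to pick such a branch globally on a neighborhood of the range of $F$.

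First I would verify that the closure of the range $\overline{\cR(F)}$ is a compact subset of $\bC \setminus \{0\}$. By the Riemann--Lebesgue lemma, $F(z) \to q \neq 0$ as $|z| \to \infty$, and combined with the continuity of $F$ and the hypothesis that $F$ is nowhere zero on $\bR$, this forces
\begin{align*}
0 < \inf_{z \in \bR}|F(z)| \leq \sup_{z \in \bR}|F(z)| < \infty,
\end{align*}
so $\overline{\cR(F)}$ is compact and bounded away from the origin.

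Second, and this is the crucial step, I would use the index-$0$ assumption to manufacture an open set $\Omega \subset \bC \setminus \{0\}$ with $\overline{\cR(F)} \subset \Omega$ on which $\log$ admits a single-valued holomorphic branch. The index-$0$ condition translates into the statement that a continuous argument $\theta(z)$ of $F(z)/|F(z)|$ has equal limits at $\pm \infty$, so the closed curve $F(\bR) \cup \{q\}$ has winding number $0$ around the origin. Therefore $0$ lies in the unbounded component of $\bC \setminus \overline{\cR(F)}$, and one can join $0$ to $\infty$ within this component by a simple (e.g.\ piecewise linear, uncrossed) arc $\Gamma$. Setting $\Omega := \bC \setminus \Gamma$ produces a simply connected open set containing $\overline{\cR(F)}$ and avoiding $0$; on it, a holomorphic branch $\varphi$ of $\log$ exists.

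Third, I would apply \thref{la-1} with this $\varphi$ to the function $F$ written in the form $F(z) = q + \int_{\bR} e^{\ii zx} f(x)\dx$: there exist $r \in \bC$ and $g \in L_\omega^1(\bR;\bC)$ with
\begin{align*}
\varphi(F(z)) = r + \int_{\bR} g(x) e^{\ii zx} \dx, \qquad z \in \bR.
\end{align*}
Since $\exp \circ \varphi = \mathrm{id}_{\Omega}$ and $F(z) \in \Omega$ for every $z$, exponentiation yields the asserted representation. The main obstacle is the second step: turning the analytic index-$0$ hypothesis into the topological statement that $\overline{\cR(F)}$ sits inside a simply connected subset of $\bC \setminus \{0\}$, so that a global holomorphic logarithm is available. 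Once this topological reduction is in place, the weighted Wiener--L\'evy theorem carries out all of the analytic work and preserves the membership in $L_\omega^1(\bR;\bC)$ automatically, so the weighted version of Krein's theorem follows with exactly the same structure as the unweighted one.
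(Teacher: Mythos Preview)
Your strategy has a genuine gap at the topological step. The implication ``the closed curve $F(\bR)\cup\{q\}$ has winding number $0$ about the origin $\Rightarrow$ $0$ lies in the unbounded component of $\bC\setminus\overline{\cR(F)}$'' is false. Winding number $0$ is a statement about the \emph{parametrized} curve; it says nothing about whether the \emph{trace} separates $0$ from $\infty$. A concrete obstruction: take $\theta\in C_c^\infty(\bR)$ with $\theta(\bR)\supset[0,2\pi]$ and set $F(z)=q\,e^{i\theta(z)}$. Then $F(z)-q$ is the Fourier transform of an $L^1$-function (expand $e^{i\theta}-1$ as a power series in $\theta=\widehat{g}$, $g\in\mathcal S$), the index of $F$ is $0$ since $\theta$ has equal limits at $\pm\infty$, yet $\overline{\cR(F)}$ is the full circle $\{|w|=|q|\}$ and $0$ sits in its bounded complementary disk. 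For this $F$ there is \emph{no} simply connected open $\Omega\subset\bC\setminus\{0\}$ containing $\overline{\cR(F)}$, so no single-valued holomorphic branch of $\log$ is available on a neighbourhood of the range, and \thref{la-1} cannot be invoked in one shot as you propose.

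The paper circumvents this obstruction by never asking for a global holomorphic logarithm on a neighbourhood of $\cR(F)$. Instead it works with the continuous (``distinguished'') logarithm along the curve, which exists because $F\neq 0$ and, thanks to index $0$, extends to a function $c+\varphi$ with $\varphi\in C_0(\bR;\bC)$. It then uniformly approximates $\varphi$ by a Fourier transform $\widetilde\varphi=\widehat{g_1}$ of a bounded compactly supported $g_1$ so that $\sup|\varphi-\widetilde\varphi|<\pi/2$. This forces $\exp(\varphi-\widetilde\varphi)$ to take values in a half-plane on which the \emph{principal} branch of $\log$ is holomorphic, and only now is \thref{la-1} applied (twice: once to $\exp(-\widetilde\varphi)$, once to the principal $\log$). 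The approximation step is precisely what reduces the problem to a region where a single-valued logarithm exists; without it, your direct application of \thref{la-1} cannot go through in general.
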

\begin{proof}
First, note that for every function $\varphi$ in the set $C_0(\bR; \bC) = \{\varphi \in C(\bR; \bC): \lim_{|x| \to \infty} \varphi(x) = 0 \}$ of continuous functions vanishing at infinity, there exists a sequence $(g_n)_{n \in \bN}$ of bounded, compactly supported functions such that the sequence $(\varphi_n)_{n \in \bN}$ defined by $\varphi_n(z) = \int_\bR g_n(x) e^{\ii zx} \dx $ for $z \in \bR$, $n \in \bN$ converges uniformly to $\varphi$ as $n \to \infty$. 
To see this, observe that the set $C_\textrm{c}(\bR; \bC)$ of continuous functions on $\bR$ with compact support is dense in $C_0(\bR; \bC)$ with respect to uniform convergence, and the set $C_\textrm{c}^\infty(\bR; \bC)$ of infinitely often differentiable functions on $\bR$ with compact support is dense in $C_c(\bR; \bC)$ with respect to uniform convergence (this follows easily using mollifiers, see e.g. \cite[App. C.4, Thm. 6, p.630]{evans2010}). 
Hence we can assume a-priori that $\varphi \in C_\textrm{c}^\infty(\bR; \bC)$. 
For such $\varphi$, it is easily seen that choosing $g_n(x) \coloneqq \frac{1}{2\pi} \int_\bR \varphi(y) e^{-\ii xy} \dy \1_{[-n,n]}(x)$ for $x \in \bR$ gives the desired sequence, since $g(x) \coloneqq \frac{1}{2\pi} \int_{\bR} \varphi(y) e^{-\ii x y} \dy$ is bounded and in $L^1(\bR; \bC)$ as a consequence of $\varphi \in C_\textrm{c}^\infty(\bR; \bC)$. 

Denote by $g$ the distinguished logarithm of $\frac{F}{|F|}$. 
We can write $F(z) = \exp(g(z)) |F(z)| =  \exp(g(z) + \log(|F(z)|))$, where here $\log$ denotes the real logarithm. 
Since $F$ has index $0$, there exists $\lim_{|z| \to \infty} g(z)$, and by the Riemann-Lebesgue theorem, $|F(z)| \to |q| \neq 0$ as $|z| \to \infty$, so $\log(|F(z)|) \to \log(|q|)$ as $|z| \to \infty$. 
Thus, it holds $F(z) = \exp(c+\varphi(z))$ for $z \in \bR$ with some $c \in \bC$ and $\varphi \in C_0(\bR, \bC)$.
By the argument above, there exists a function $\widetilde{\varphi}$ on $\bR$ such that $\sup_{z \in \bR}|\varphi - \widetilde{\varphi}(z)| < \pi/2$ and $\widetilde{\varphi}(z) = \int_{\bR} g_1(x) e^{\ii zx} \dx$ for some bounded, compactly supported function $g_1$ on $\bR$. 
By \thref{la-1} there exist $\widetilde{q} \in \bC$ and $\widetilde{f} \in L_\omega^1(\bR; \bC)$ such that $\exp(-\widetilde{\varphi}(z)) = \widetilde{q} + \int_{\bR}e^{\ii zx} \widetilde{f}(x) \dx$ for $z \in \bR$, so we obtain 
\begin{align*}
\exp(\varphi(z)-\widetilde{\varphi}(z)) 
&= e^{-c} F(z) \exp(-\widetilde{\varphi}(z))\\
&= e^{-c} \left(
	q + \int_{\bR}e^{\ii zx} f(x) \dx
\right)
\left(
	\widetilde{q} + \int_{\bR}e^{\ii zx} \widetilde{f}(x) \dx
\right)\\
&= e^{-c }q \widetilde{q} + \int_{\bR} \int_{\bR}e^{\ii zx} h(x) \dx
\end{align*}
with $h = e^{-c}(q \widetilde{f} + \widetilde{q}f + f \ast \widetilde{f})$. 
Straightforward calculations show that $h \in L_\omega^1(\bR; \bC)$, hence we can apply \thref{la-1} again, now for the principle branch of the logarithm, and obtain that for some $g_2 \in L_\omega^1(\bR; \bC)$ and $r \in \bC$ it holds $\varphi(z)-\widetilde{\varphi}(z) = r + \int_\bR g_2(x) e^{\ii zx} \dx$ for $z \in \bR$.
This shows the claim with $g \coloneqq g_1 + g_2$.
\end{proof}

Using \thref{la-2}, we get a weighted version of \cite[Thm. 4.4]{berger2018}. 

\begin{lemma} \thlabel{la-3}
Let $F: \bR \to \bC$, $z \mapsto p + \int_{\bR}e^{\ii zx} f(x) \dx$, where $p \in \bC \setminus\{0\}$ and $f \in L_\omega^1(\bR; \bC)$. 
Suppose that $F(z) \neq 0$ for all $z \in \bR$ and $F(0) = 1$.
Then there exist $m \in \bZ$ and $g \in L_\omega^1(\bR; \bC)$ such that 
\begin{align*}
F(z)
= \exp\left(
	\int_{\bR} \left(
		g(x) + \frac{me^{-|x|}}{|x|}\sgn(x)
	\right)(e^{\ii zx}-1) \dx
\right)
\quad \textrm{for all } z \in \bR.
\end{align*} 
\end{lemma}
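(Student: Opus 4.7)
The plan is to reduce to the index-$0$ situation of Lemma~\thref{la-2}. For $m \in \bZ$ I define the ``turning factor''
\begin{align*}
\chi_m(z) \coloneqq \exp\!\left(m \int_{\bR} \frac{e^{-|x|}}{|x|}\sgn(x)(e^{\ii zx}-1)\dx\right).
\end{align*}
A direct computation using the oddness of $\sgn$ yields
\begin{align*}
\int_{\bR} \frac{e^{-|x|}}{|x|}\sgn(x)(e^{\ii zx}-1)\dx
= 2\ii \int_0^{\infty} \frac{e^{-x}\sin(zx)}{x}\dx
= 2\ii \arctan(z),
\end{align*}
so that $\chi_m(z) = e^{2\ii m \arctan(z)} = \big(\tfrac{1+\ii z}{1-\ii z}\big)^m$. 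In particular $|\chi_m|\equiv 1$ on $\bR$, so $\chi_m$ is zero-free, and its index equals $m$.

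The first technical step is to show that $\chi_m$ belongs to the weighted Wiener algebra. From $\tfrac{1+\ii z}{1-\ii z} = -1 + \tfrac{2}{1-\ii z}$ and $\tfrac{1-\ii z}{1+\ii z} = -1 + \tfrac{2}{1+\ii z}$, together with the identities
\begin{align*}
\tfrac{1}{1-\ii z} = \int_0^\infty e^{-x}e^{\ii zx}\dx, \qquad \tfrac{1}{1+\ii z} = \int_{-\infty}^0 e^{x}e^{\ii zx}\dx,
\end{align*}
the GRS bound \eqref{eq-int} places both $e^{-x}\1_{(0,\infty)}$ and $e^{x}\1_{(-\infty,0)}$ in $L_\omega^1(\bR;\bC)$. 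Submultiplicativity of $\omega$ makes $L_\omega^1(\bR;\bC)$ an algebra under convolution, so a binomial expansion (passing from pointwise products on the Fourier side to convolutions on the function side) yields
\begin{align*}
\chi_m(z) = (-1)^m + \int_{\bR} \phi_m(x)\, e^{\ii zx}\dx
\end{align*}
for some $\phi_m \in L_\omega^1(\bR;\bC)$ and every $m \in \bZ$.

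Next I set $m \coloneqq \mathrm{ind}(F) \in \bZ$ (an integer by \cite[Rem.~4.3]{berger2018}) and form $\tilde F \coloneqq F\cdot \chi_{-m}$. Multiplying the two representations gives $\tilde F(z) = \tilde p + \int_{\bR}\tilde f(x)e^{\ii zx}\dx$ with $\tilde p = (-1)^m p\neq 0$ and $\tilde f \in L_\omega^1(\bR;\bC)$. Both $F$ and $\chi_{-m}$ are zero-free, so $\tilde F$ is zero-free, and additivity of the index gives $\mathrm{ind}(\tilde F) = 0$. Lemma~\thref{la-2} now produces $r \in \bC$ and $g \in L_\omega^1(\bR;\bC)$ with $\tilde F(z) = \exp\!\left(r + \int_{\bR}g(x)e^{\ii zx}\dx\right)$. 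Multiplying through by $\chi_m$ and invoking the kernel identity for $2\ii m\arctan(z)$ established above yields
\begin{align*}
F(z) = \exp\!\left( \left(r + \textstyle\int_{\bR}g(x)\dx\right) + \int_{\bR}\left(g(x) + \tfrac{m e^{-|x|}}{|x|}\sgn(x)\right)(e^{\ii zx}-1)\dx\right).
\end{align*}
Setting $z=0$ and using $F(0)=1$ forces $r + \int_{\bR}g(x)\dx \in 2\pi \ii \bZ$; the corresponding constant factor in the exponential equals $1$ and can be dropped, giving the asserted representation.

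The main obstacle will be the bookkeeping in the weighted setting: proving that $\chi_m$ lies in the weighted Wiener algebra simultaneously requires \eqref{eq-int} to place the simple kernels $e^{\mp x}\1_{(0,\infty)}$ and $e^{\pm x}\1_{(-\infty,0)}$ in $L_\omega^1(\bR;\bC)$, and submultiplicativity of $\omega$ to propagate weighted integrability through the convolutions arising when $F$ is multiplied by $\chi_{-m}$. Once this algebraic machinery is in place, the rest is a direct application of Lemma~\thref{la-2} followed by a normalization using $F(0)=1$.
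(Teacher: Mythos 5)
Your argument is correct and follows essentially the same route as the paper: the paper's proof is a one-line reference to the proof of \cite[Thm.~4.4]{berger2018} (which is exactly the index-reduction via the turning factor $\left(\tfrac{1+\ii z}{1-\ii z}\right)^m$), with Krein's Theorem~L replaced by the weighted Lemma~\thref{la-2} and the observation that the kernels $e^{\mp x}\1_{(0,\infty)}$, $e^{\pm x}\1_{(-\infty,0)}$ lie in $L_\omega^1(\bR;\bC)$ by \eqref{eq-int}. You have simply written out in full the argument the paper cites, including the normalization at $z=0$, and all steps check out.
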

\begin{proof}
Similar to the proof of \cite[Thm 4.4]{berger2018}, by using \thref{la-2} instead of \cite[Thm. L, p.175]{krein1962} and noting that the function $\bR \to \bR$, $x \mapsto -2e^{x} \1_{(-\infty,0)}(x)$ appearing in the proof is an element of $L_\omega^1(\bR; \bC)$ by \eqref{eq-int}. 
\end{proof}

The next result is a Wiener-L\'evy type theorem for almost periodic functions.
A function $f: \bR \to \bC$ is \emph{almost periodic} if it is continuous and for every $\varepsilon > 0$ there exists $L > 0$ such that for every $a \in \bR$ there is some $\tau \in [a, a+L]$ with 
\begin{align*}
|f(z+\tau)-f(z)| < \varepsilon
\quad \textrm{for all } z \in \bR.
\end{align*}
It is well-known that every function of the form $\sum_{y \in A} a_y e^{\ii zy}$ with a discrete set $A \subset \bR$ and an absolutely summable sequence $(a_y)_{y \in A} \subset \bR$ is almost periodic.
For more information on almost periodic functions, see e.g. \cite{corduneanu2009} or \cite{levitan1982}.

\begin{proposition}\thlabel{prop-4}
Suppose that $A \subset \bR$ is a discrete set and $(a_y)_{y \in A} \subset \bC$ a sequence such that $\sum_{y \in A} \omega(y)|a_y| < \infty$, and let
\begin{align*}
f: \bR \to \bC, 
\quad z \mapsto \sum_{y \in A} a_y e^{\ii z y}.
\end{align*}
Let further $\Omega \subset \bC$ be an open set containing the closure $\overline{\cR(f)}$ of the range of $f$ and $F: \Omega \to \bC$ a holomorphic function. 
Then there exist a discrete set $C \subset \bR$ and a sequence $(c_y)_{y \in C}$ such that $\sum_{y \in C}\omega(y)|c_y| < \infty$ and 
\begin{align*}
(F \circ f)(z) =
\sum_{y \in C} c_y e^{\ii zy}
\quad \textrm{for all } z \in \bR.
\end{align*}
\end{proposition}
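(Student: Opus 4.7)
The plan is to recognize the functions in question as elements of a weighted Banach algebra and deduce the statement from the holomorphic functional calculus. I would start by introducing the space $\cA_\omega$ of all $g \colon \bR \to \bC$ admitting a representation $g(z) = \sum_{y \in A_g} b_y e^{\ii zy}$ with $A_g \subset \bR$ at most countable and $\|g\|_\omega := \sum_{y \in A_g} \omega(y)|b_y| < \infty$, and observing that under pointwise multiplication and $\|\cdot\|_\omega$ this is a commutative unital Banach algebra; submultiplicativity of the norm is an immediate consequence of $\omega(y+y') \le \omega(y)\omega(y')$, and completeness follows from that of a weighted $\ell^1$-space. The hypotheses of the proposition state exactly that $f \in \cA_\omega$.

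The core of the argument is to identify the spectrum $\sigma(f)$ of $f$ in $\cA_\omega$ with $\overline{\cR(f)}$. The inclusion $\cR(f) \subseteq \sigma(f)$ is immediate since every evaluation $\chi_z \colon g \mapsto g(z)$ is a character of $\cA_\omega$. For the converse, I would show that any character $\chi$ of $\cA_\omega$ is unitary on the generators $e_y(z) := e^{\ii zy}$. Using $e_y e_{y'} = e_{y+y'}$ and $\|e_y\|_\omega = \omega(y)$, one obtains $\chi(e_y) \chi(e_{-y}) = 1$ and $|\chi(e_y)|^n = |\chi(e_{ny})| \le \omega(ny)$ for every $n \in \bN$; combining this with $\omega(ny)^{1/n} \to 1$, which follows from the GRS condition by the same argument as in the paragraph preceding \eqref{eq-int}, gives $|\chi(e_y)| = 1$ for all $y \in \bR$. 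Therefore $\chi$ corresponds to a group homomorphism $\bR \to \bT$, and a standard Banach-algebra argument (identical to that for the unweighted algebra obtained by setting $\omega \equiv 1$) identifies the Gelfand space of $\cA_\omega$ as the set of such homomorphisms, within which the real-evaluation characters form a weak-$\ast$ dense subset; continuity of the Gelfand transform $\widehat f$ then yields $\sigma(f) = \overline{\cR(f)}$.

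With $\sigma(f) = \overline{\cR(f)} \subset \Omega$, the holomorphic functional calculus produces an element $F(f) \in \cA_\omega$ whose Gelfand transform equals $F \circ \widehat f$. Evaluating at the real-evaluation characters $\chi_z$ gives $F(f)(z) = F(f(z)) = (F \circ f)(z)$ for every $z \in \bR$, so $F \circ f \in \cA_\omega$, and reading off its representation as an element of $\cA_\omega$ delivers the required set $C$ and coefficients $(c_y)_{y \in C}$ with $\sum_{y \in C} \omega(y)|c_y| < \infty$.

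The hard part is the spectral identification in the \emph{weighted} setting: a priori, the weight $\omega$ could admit additional non-unitary characters of $\cA_\omega$, which would enlarge $\sigma(f)$ strictly beyond $\overline{\cR(f)}$ and make the hypothesis on $F$ insufficient for the holomorphic functional calculus to apply. The GRS condition is precisely what excludes these extra characters, after which everything reduces to a standard Banach-algebraic argument.
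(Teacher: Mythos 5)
Your proof is correct and follows essentially the same route as the paper: both work in the weighted Banach algebra $\cA_\omega$, identify the spectrum of $f$ with $\overline{\cR(f)}$, and conclude via the holomorphic functional calculus. The only difference is that you derive the spectral identification directly (unitarity of every character on the generators $e_y$ via the GRS condition, plus density of the evaluation characters in the Bohr compactification), whereas the paper outsources precisely this step to Balan--Krishtal's inverse-closedness of $\cA_\omega$ in $C(\bR;\bC)$ and to the functional-calculus reasoning of Gr\"ochenig and Lindner et al.
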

\begin{proof}
Due to Balan and Krishtal \cite[Lem. 3.1, Thm. 3.2]{balan2010}, the set $\cA_\omega$ of all functions $f: \bR \to \bC$ that are given by $f(z) = \sum_{y \in A} a_y e^{\ii zy}$ for $z \in \bR$ with some discrete set $A \subset \bR$ and a sequence $(a_y)_{y \in A} \subset \bC$ satisfying $\sum_{y \in A} \omega(y)|a_y| < \infty$ forms a Banach algebra with $\|f\|_{\omega} \coloneqq \sum_{y \in A} \omega(y)|a_y|$, that is inverse-closed in the Banach algebra $C(\bR,  \bC)$ of complex-valued continuous functions on $\bR$. 
Hence, the proof works similar to the second proof given by Lindner et al. \cite[Thm. 8.9]{lindner2018}, in conjunction with the reasoning given in \cite[Thm. 5.16]{groechenig2010}.
\end{proof}

Applying this for the principal part of the logarithm gives the following result. 
It can be seen as a generalization of Theorem 2 of Alexeev and Khartov to the case when arbitrary weight functions (rather than $\omega \equiv 1$) and general complex valued sequences $(a_y)_{y \in A}$ (rather than those leading to probability measures) are allowed. 
Observe, however, that in \cite{alexeev2021}, the structure of $\kappa$ is specified in greater detail. 

\begin{corollary} \thlabel{la-5}
Let $A \subset \bR$ be a discrete set and $(a_y)_{y \in A} \subset \bC$ a sequence such that $\sum_{y \in A}\omega(y)|a_y| < \infty$ and $\inf_{z \in \bR} |\sum_{y \in A} a_y e^{\ii zy}| > 0$. 
Then there exist $\kappa \in \bC$, a discrete set $C \subset \bR$ and a sequence $(c_y)_{y \in C} \subset \bC$ such that $\sum_{y \in C}\omega(y)|c_y| < \infty$ and 
\begin{align*}
\sum_{y \in A} a_y e^{\ii zy}
= \exp\left(
	\ii \kappa z
	+ \sum_{y \in C} c_y e^{\ii zy}
\right)
\quad \textrm{for all } z \in \bR.
\end{align*}
\end{corollary}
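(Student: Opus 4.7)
Let $f(z) := \sum_{y \in A} a_y e^{\ii zy}$. By hypothesis $f$ belongs to the Banach algebra $\cA_\omega$ introduced in the proof of Proposition~\thref{prop-4} and satisfies $\inf_{z \in \bR}|f(z)| > 0$. My plan is to write $f$ in the form $f(z) = \exp(\ii\kappa z + H(z))$ for some $\kappa \in \bR$ and $H \in \cA_\omega$; unfolding $H(z) = \sum_{y \in C} c_y e^{\ii zy}$ then yields the representation in the statement.

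The natural idea is to apply Proposition~\thref{prop-4} directly with the target function being a branch of the logarithm. The obstruction is that any single-valued holomorphic logarithm is defined only on simply connected subdomains of $\bC \setminus \{0\}$, while the closure of the range of $f$ may wrap around the origin. To bypass this, I would strip off the correct character first: since $f$ is continuous and zero-free, it admits a continuous distinguished logarithm $\phi: \bR \to \bC$, and by the Bohr theory of zero-free almost periodic functions there exists a unique $\kappa \in \bR$ such that $\phi(z) - \ii\kappa z$ is bounded (in fact Bohr-almost periodic). Set $g(z) := e^{-\ii\kappa z} f(z) = \sum_{y \in A} a_y e^{\ii z(y-\kappa)}$. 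Submultiplicativity of $\omega$ gives
\begin{align*}
\sum_{y \in A}\omega(y-\kappa)|a_y| \leq \omega(-\kappa) \sum_{y \in A}\omega(y)|a_y| < \infty,
\end{align*}
so $g \in \cA_\omega$, and $g$ has trivial index.

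It remains to exhibit $H \in \cA_\omega$ with $g = \exp(H)$, for then $f(z) = \exp(\ii\kappa z + H(z))$. Following the second proof of Lindner et al.~\cite[Thm.~8.9]{lindner2018}, transposed from the Wiener algebra on $\bT$ to $\cA_\omega$ on $\bR$, I would factor $g$ as a finite product $g = \prod_{j=1}^N g_j$ in $\cA_\omega$ with each $\overline{\cR(g_j)}$ contained in an open disk in $\bC \setminus \{0\}$; on such a disk a principal branch of the logarithm is holomorphic, and Proposition~\thref{prop-4} then supplies $h_j \in \cA_\omega$ with $g_j = \exp(h_j)$, whence $H := \sum_j h_j \in \cA_\omega$ gives $g = \exp(H)$. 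The main technical obstacle I anticipate is arranging this factorization cleanly: one approximates $g$ in $\cA_\omega$-norm by a trigonometric polynomial, uses the inverse-closedness of $\cA_\omega$ in $C(\bR;\bC)$ proved by Balan and Krishtal~\cite{balan2010}, and peels off successive small $\cA_\omega$-perturbations of nonzero constants whose logarithms are recovered from convergent Neumann-type series.
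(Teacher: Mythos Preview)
Your overall strategy matches the paper's: invoke Bohr's theorem (the paper cites Levitan--Zhikov \cite[Ch.~3.4]{levitan1982}) to write $f(z)=\exp(\ii\kappa z + h(z))$ with $\kappa\in\bR$ and $h$ almost periodic, then show $h\in\cA_\omega$. The observation that $g(z):=e^{-\ii\kappa z}f(z)\in\cA_\omega$ by submultiplicativity is exactly what the paper uses implicitly.

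The gap is in your factorisation step. Approximating $g$ in the $\cA_\omega$-norm by a trigonometric polynomial $p$ gives $g=p\cdot(1+q)$ with $\|q\|_\omega$ small, and a Neumann series yields $\log(1+q)\in\cA_\omega$; but you are then left needing $\log p\in\cA_\omega$, which is the same problem you started with (now for a polynomial, but with no evident simplification, since its exponents may be rationally independent and the induced map to a torus need not have a global holomorphic logarithm). Your sketch does not explain how the iteration terminates.

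The paper sidesteps this by approximating the \emph{logarithm} $h$, not $g$. Since $h$ is already known to be almost periodic, one picks a trigonometric polynomial $h'$ with $\sup_z|h(z)-h'(z)|<\pi/2$. Then $\exp(-h')\in\cA_\omega$ by Proposition~\ref{prop-4} (applied with $F=\exp$), hence $\exp(h-h')=g\cdot\exp(-h')\in\cA_\omega$ as a product in the algebra. Its range lies in a half-plane where the principal logarithm is holomorphic, so one further application of Proposition~\ref{prop-4} gives $h-h'\in\cA_\omega$, and therefore $h=(h-h')+h'\in\cA_\omega$. This is a single clean factorisation $g=\exp(h')\cdot\exp(h-h')$, with the first factor handled trivially because you already know its logarithm, rather than an open-ended peeling procedure.
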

\begin{proof}
Let $f: \bR \to \bC$, $z \mapsto \sum_{y \in A} a_y e^{\ii zy}$ for $z \in \bR$.
By \cite[Ch. 3.4]{levitan1982}, there exist $\kappa \in \bR$ and a (continuous) almost periodic function $g: \bR \to \bR$ such that 
\begin{align*}
f(z)
= \exp\left(
	\ii (\kappa z + g(z) )
	+ \log|f(z)|
\right)
\end{align*}
for $z \in \bR$.
Since $f$ is bounded away from zero and the mapping $t \mapsto \log|t|$ is continuous, the function $\log |f|$ is almost periodic.
Thus, we can write $f(z)= \exp(\ii \kappa z + h(z))$, where $h = \ii g + \log |f|$ is an almost periodic function. 
 
Due to \cite[Ch. 2.2]{levitan1982}, we can find a trigonometric polynomial
$h': \bR \to \bC$, $z \mapsto \sum_{y \in B} b_y e^{\ii zy}$ with some finite set $B \subset \bR$ and a (finite) sequence $(b_y)_{y \in B}$, such that $\inf_{z \in \bR} |h(z) - h'(z)| < \frac{\pi}{2}$. 
Now we can use a similar approach as in the proof of \thref{la-2}:
Let $\cA_{\omega}$ be defined as in the proof of \thref{prop-4}.
By \thref{prop-4}, the mapping $\exp(-h')$ is an element of $\cA_{\omega}$, and hence so is $\exp(h-h')$, since $\exp(h(z)-h'(z)) = {f(z)e^{-\ii \kappa z}}{\exp(-h'(z))}$ for $z\in \bR$ and $\cA_{\omega}$ is a subalgebra of $C(\bR; \bC)$. 
Applying \thref{prop-4} again, it follows that also $h-h'$ is an element of $\cA_\omega$, finishing the proof. 
\end{proof}

Now we can finally prove \thref{hauptthm}.

{\it Proof of \thref{hauptthm}}.{
We will prove the weighted case, since the non-weighted case corresponds to the choice $\omega \equiv 1$.
We show (iv) $\Rightarrow$ (iii) $\Rightarrow$ (i) $\Rightarrow$ (ii) $\Rightarrow$ (iv) $\Rightarrow$ (v) $\Rightarrow$ (i). 

Suppose that (iv) holds, i.e. there exist $\gamma \in \bC$, a discrete set $C \subset \bR$, a sequence $(c_y)_{y \in C} \subset \bC$ with $\sum_{y \in C} \omega(y) |c_y| < \infty$, $h \in L_\omega^1(\bR; \bC)$ and $m\in \bZ$ such that
\begin{align*}
F(z)
= \exp \Biggl(
	\ii \gamma z
	+ \sum_{y \in C} c_y e^{\ii zy}
	+ \int_{\bR} h(x) e^{\ii zx} \dx
	+ m\int_{\bR} \frac{e^{-|x|}}{|x|} \sgn(x) (e^{\ii zx}-1) \dx
\Biggr)
\end{align*}
for all $z \in \bR$.
Let us show that this implies (iii). 
By \thref{prop-4} there exist a discrete set $D \subset \bR$ and a sequence $(d_y)_{y \in D} \subset \bC$ such that $\sum_{y \in D} \omega(y)|d_y| < \infty$ and 
\begin{align*}
\exp\left(
	-\sum_{y \in C} c_y e^{\ii zy}
\right) 
= \sum_{y \in D} d_y e^{\ii zy}
\quad \textrm{ for all } z \in \bR
\end{align*}
and by \thref{la-1} we can find $\lambda \in \bC$ and a function $\varphi \in L_\omega^1(\bR; \bC)$ satisfying 
\begin{align*}
\exp\left(
	-\int_{\bR} h(x) e^{\ii zx} \dx
\right)
= \lambda + \int_{\bR} e^{\ii zx} \varphi(x) \dx
\quad \textrm{ for all } z \in \bR.
\end{align*}
In the proof of Berger \cite[Thm. 4.4]{berger2018} it was shown that 
\begin{align*}
\exp\left(
	m\int_{\bR} \frac{e^{-|x|}}{|x|} \sgn(x) (e^{\ii zx}-1) \dx
\right) 
= \left( 
	\frac{\ii - z}{\ii + z}
\right)^m
\quad \textrm{and} \quad 
\frac{z + \ii}{z - \ii} = 1 - \int_{\bR} e^{\ii zx} \psi(x) \dx
\end{align*}
for $z \in \bR$, where $\psi(x) = 2e^{x} \1_{(-\infty, 0)}(x)$ for $x \in \bR$. 
Note that $\psi \in L_\omega^1(\bR; \bC)$ by \eqref{eq-int}.
Now is easy to see that
\begin{align*}
\exp\left(
	-m\int_{\bR} \frac{e^{-|x|}}{|x|} \sgn(x) (e^{\ii zx}-1) \dx
\right) 
&= \left(
	-1 +\int_{\bR} e^{\ii zx} \psi(x) \dx
\right)^m \\
&= (-1)^m + \int_{\bR} e^{\ii zx} \eta(x) \dx
\end{align*}
for some $\eta \in L_\omega^1(\bR; \bC)$.
Altogether, for $z \in \bR$ we obtain
\begin{align*}
\frac{1}{F(z)}
&=\exp \left(
	-\ii \gamma z
	- \sum_{y \in C} c_y e^{\ii zy}
	- \int_{\bR} h(x) e^{\ii zx} \dx
	- m\int_{\bR}
		\frac{e^{-|x|}}{|x|} \sgn(x)
	(e^{\ii zx}-1) \dx
\right) \\
&=e^{-\ii \gamma z}
	\sum_{y \in D} d_y e^{\ii zy}
	\left(
		 \lambda + \int_{\bR} e^{\ii zx} \varphi(x) \dx
	\right)
	\left( 
		(-1)^m + \int_{\bR} e^{\ii zx} \eta(x) \dx
	\right) \\
&= e^{-\ii \gamma z} \left(
	\sum_{y \in D} ((-1)^m\lambda d_y) e^{\ii zy}
	+ \int_{\bR} e^{\ii zx} \zeta(x) \dx
\right)
\end{align*}
with $\zeta \coloneqq ( \lambda \eta + (-1)^m \varphi + \varphi \ast \eta) \ast \sum_{y \in D} d_y \delta_y$. 
Note that $\xi \coloneqq \lambda \eta + (-1)^m \varphi + \varphi \ast \eta \in L_\omega^1(\bR; \bC)$ as $L_\omega^1(\bR; \bC)$ forms a convolution Banach algebra, and since 
\begin{align*}
\int_\bR \omega(x) |\zeta(x)| \dx
&\leq \sum_{y \in D} |d_y| \int_\bR \omega(x) |\xi(x-y)| \dx
\leq \sum_{y \in D} \omega(y) |d_y| \int_\bR \omega(x-y) |\xi(x-y)| \dx \\
&= \sum_{y \in D} \omega(y) |d_y| \int_\bR \omega(x) |\zeta(x)| \dx < \infty
\end{align*}
(where we used that $\omega$ is a weight function in the second inequality), also $\zeta \in L_\omega^1(\bR; \bC)$, hence (iii) holds.

If (iii) holds, then the function $G$ is bounded, since for all $z \in \bR$ it holds
\begin{align*}
\left|
\sum_{y \in B} b_y e^{\ii zy} + \int_{\bR} g(x) e^{\ii zx} \dx
\right|
\leq \sum_{y \in B} |b_y| + \|g\|_{L^1(\bR; \bC)}
< \infty.
\end{align*}
Hence, $F(z) = \frac{1}{G(z)}$ for $z \in \bR$ is bounded away from zero, that is, (i) holds. 

Suppose that (i) holds and let $\varepsilon \coloneqq \inf_{z \in \bR} |F(z)| > 0$.
By the Riemann-Lebesgue lemma, there exists some $M > 0$ such that $|\int_{\bR} f(x) e^{\ii z x} \dx| < \frac{\varepsilon}{2}$ for all $z \in \bR$ with $|z|> M$.
Hence, $|\sum_{y \in A} a_y e^{\ii zy}| \geq |F(z)| - |\int_{\bR} f(x) e^{\ii z x} \dx| \geq \frac{\varepsilon}{2}$ for all $z \in \bR$ with $|z| \geq M$. 
As the mapping $\bR \to \bC$, $z \mapsto \sum_{y \in A} a_y e^{\ii zy}$ is almost periodic, we can find an $l > 0$ such that there exists some $\tau \in (2M, 2M+l)$ that satisfies
\begin{align*} 
\left|
	\sum_{y \in A} a_y e^{\ii (z+\tau)y}
	- \sum_{y \in A} a_y e^{\ii zy}
\right|
< \frac{\varepsilon}{4}
\quad \textrm{for all } z \in \bR. 
\end{align*}
For $z \in [-M,M]$ we obtain
\begin{align*}
\left|
	\sum_{y \in A} a_y e^{\ii zy}
\right|
\geq \left|
	\sum_{y \in A} a_y e^{\ii (z+\tau) y}
\right| - \left|
	\sum_{y \in A} a_y e^{\ii (z+\tau) y}
	-\sum_{y \in A} a_y e^{\ii zy}
\right|
\geq \frac{\varepsilon}{2} - \frac{\varepsilon}{4} 
= \frac{\varepsilon}{4},
\end{align*}
and hence 
$\inf_{z \in \bR} |\sum_{y \in A} a_y e^{\ii zy}|
\geq \frac{\varepsilon}{4}
> 0$, so (ii) holds.

Next, we show that (ii) implies (iv). 
To this end suppose that (ii) is satisfied. 
Since $\inf_{z \in \bR} |\sum_{y \in A} a_y e^{\ii zy}|>0$, by Balan and Krishtal \cite[Thm. 3.2]{balan2010} there exist a discrete set $D \subset \bR$ and a sequence $(d_y)_{y \in D} \subset \bC$ such that $\sum_{y \in D} \omega(y)|d_y| < \infty$ and $\sum_{y \in A} a_y e^{\ii zy} \sum_{y \in D} d_y e^{\ii zy} = 1$ for all $z \in \bR$.
Hence, we can write
\begin{align*}
0 
\neq F(z)
= \sum_{y \in A} a_y e^{\ii zy} \left(
	1+\int_{\bR} \varphi(x) e^{\ii xz} \dx
\right)
\end{align*}
for all $z \in \bR$, where $\varphi = f \ast \sum_{y \in D} d_y \delta_y \in L_\omega^1(\bR; \bC)$ by a similar reasoning as for $\zeta$ above. 
By \thref{la-5} there exist a constant $\gamma \in \bC$, a discrete set $C \subset \bR$ and a sequence $(c_y)_{y \in C}$ such that $\sum_{y \in C} \omega(y)|c_y| < \infty$ and 
\begin{align*}
\sum_{y \in A} a_y e^{\ii zy}
= \exp \left(
	\ii \gamma z 
	+ \sum_{y \in C} c_y e^{\ii zy}
\right)
\quad \textrm{for all } z \in \bR.
\end{align*}

Moreover, by \thref{la-3} there exist $m \in \bZ$ and $h \in L_\omega^1(\bR; \bC)$ such that
\begin{align*}
\frac{1+\int_{\bR} \varphi(x)e^{\ii zx} \dx}{1+\int_{\bR} \varphi(x)\dx} 
= \exp\left(
		 \int_{\bR} \left(
		h(x) + \frac{me^{-|x|}}{|x|}\sgn(x)
	\right)(e^{\ii zx}-1) \dx
\right)
\end{align*}
holds for all $z \in \bR$ (observe that $1+\int_{\bR} \varphi(x)\dx \neq 0$ by the formula above, since $F(0) \neq 0$). 
Let $\kappa$ be some logarithm of $1+\int_{\bR} \varphi(x)\dx$.
In case $0 \in C$ replace $c_0$ by $c_0 + \kappa - \int_\bR h(x) \dx$, and otherwise replace $C$ by $C \cup \{0\}$ and set $c_0 = \kappa - \int_\bR h(x) \dx$. 
Combining those two representations, we obtain 
\begin{align*}
F(z)
= \exp \Biggl(
	\ii \gamma z
	+ \sum_{y \in C} c_y e^{\ii zy}
	+ \int_{\bR} h(x) e^{\ii zx} \dx 
	+ m\int_{\bR} \frac{e^{-|x|}}{|x|} \sgn(x) (e^{\ii zx}-1) \dx
\Biggr)
\end{align*}
for all $z \in \bR$.

Clearly, (v) follows from (iv).
Finally, we show that (v) implies (i), the proof is very similar to the proof of Khartov \cite[Thm. 2]{khartov2022}.
So suppose that $a, \gamma \in \bC$ and $\nu$ is a complex quasi-L\'evy type measure on $\bR$ such that
\begin{align*}
F(z)
= \exp \left(
	\ii \gamma z
	- \frac{az^2}{2} 
	+ \int_{\bR} \left( 
		e^{\ii zx} -1 -\ii z x \1_{[-1,1]}(x)
	\right) \nu(\dx)
\right)
\quad \textrm{for all } z \in \bR. 
\end{align*}
For $z \in \bR$ define $F_\textnormal{d}(z) \coloneqq \sum_{y \in A} a_y e^{\ii zy}$ and $F_\textnormal{ac}(z) \coloneqq \int_{\bR} f(x)e^{\ii zx} \dx$, so that $F(z) = F_\textnormal{d}(z) + F_\textnormal{ac}(z)$ for all $z \in \bR$. 
Due to the assumption $\sum_{y \in A}|a_y| > 0$ and the uniqueness of the Fourier series for almost periodic functions, there exists $x_0 \in \bR$ such that $F_\textrm{d}(x_0) \neq 0$.
By shifting, we can always assume that $x_0 = 0$. 
Similar as in \cite{khartov2022}, the function $\psi_\tau: \bR \to \bR$ defined by 
\begin{align*}
\psi_\tau(z)
\coloneqq \frac{F(z+\tau)F(z-\tau)}{F(z)^2}, 
\quad z \in \bR
\end{align*}
is bounded for every $\tau \in \bR$. 
Contrary to our assumption, assume that $\inf_{z \in \bR} |F(z)| = 0$. 
In the following, we first show that there exists a sequence $(z_n)_{n \in \bN} \subset \bR$ such that $|z_n| \to \infty$ and $F(z_n) \to 0$ as $n \to \infty$, while $F_\textnormal{d}(-2z_n) \neq 0$ for all $n \in \bN$. 

If the set $\{x \in \bR: F_\textnormal{d}(x) \neq 0\}$ is a dense subset of $\bR$, then the existence of such a sequence is clear due to the continuity of $F$ and $F_\textnormal{d}$, and since $F$ has no zeros, so we can assume that there exist $a,b \in \bR$ with $a<b$ such that $F_\textnormal{d}=0$ on $[a,b]$. 
As $F_\textnormal{d}$ is an almost periodic function, for every $n \in \bN$ we can find a constant $L_n > 0$ such that for every $y \in \bR$ there exists $l_n = l_n(y) \in [y, y+L_n]$ such that 
\begin{align*}
|F_\textnormal{d}(x+l_n) - F_\textnormal{d}(x)| < \frac{1}{n}
\quad \textrm{for all } x \in \bR. 
\end{align*}
Thus, iteratively we can choose a sequence $(\tau_n)_{n \in \bN}$ such that $\tau_n > \max\{n,  \tau_{n-1}\}$ and $|F_\textnormal{d}(x)| < \frac{1}{n}$ for all $x \in [a+\tau_n, b+\tau_n]$. 
Observe that then every sequence $(u_n)_{n \in \bN}$ with $u_n \in [a+\tau_n, b+\tau_n]$ satisfies $u_n \to \infty$ as $n \to \infty$, $\lim_{n \to \infty}F_\textrm{d}(u_n) = 0$, and therefore also $\lim_{n \to \infty}F(u_n) = 0$ by the Riemann-Lebesgue lemma.
Hence, if we assume that there is no sequence $(z_n)_{n \in \bN}$ with $|z_n| \to \infty$ and $F(z_n) \to 0$ as $n \to \infty$, as well as $F_\textnormal{d}(-2z_n) \neq 0$ for all $n \in \bN$, then we can find $N \in \bN$ such that for all $n \geq N$ it holds $F_\textnormal{d} = 0$ on $[-2(b+\tau_n), -2(a+\tau_n)]$. 
Hence we have found an interval in $\bR$ of length $2(b-a)$ such that $F_\textnormal{d} = 0$ on this interval. 
Iterating this argument, we can conclude that for every $L > 0$ we can find an interval $I_L \subset \bR$ of length $L$ such that $F_\textnormal{d}(z) = 0$ for all $z \in I$. 

Recall that $F_\textrm{d}(0) \neq 0$, so by the almost periodicity of $F_\textnormal{d}$, we can now find some $L_0 > 0$ such that for every $y \in \bR$ there exists $\tau = \tau(y) \in [y, y+L_0]$ with $|F_\textnormal{d}(x+\tau) - F_\textnormal{d}(x)| < |F_\textnormal{d}(0)|$ for all $x \in \bR$, and therefore $F_\textnormal{d}(\tau) \neq 0$.
Since we can choose $L > L_0$ and $y \in \bR$ such that $[y, y+L_0] \subset I_L$, this leads to a contradiction. 
Hence the assumption was wrong, so there exists a sequence $(z_n)_{n \in \bN}$ with $|z_n| \to \infty$ and $F(z_n) \to 0$ as $n \to \infty$ and $F_\textnormal{d}(-2z_n) \neq 0$ for all $n \in \bN$.

The rest of the proof is very similar to the proof given in \cite[Thm. 2]{khartov2022}. 
For our readers' convenience we give all details.
For $n \in \bN$ consider the translation $\varphi_n(\tau) \coloneqq F_\textnormal{d}(z_n+\tau)$ for $\tau \in \bR$. 
Then $(\varphi_n)_{n \in \bN}$ is relatively compact in $C_b(\bR; \bC)$, the space of bounded continuous functions endowed with the supremum norm, see e.g. \cite[Prop. 3.6, p.56]{corduneanu2009}.
Therefore, the sequence $(\varphi_n)_{n \in \bN}$ has a subsequence $(\varphi_{n_k})_{k \in \bN}$ that converges uniformly to an almost periodic function $\varphi$ on $\bR$, i.e. 
\begin{align*} 
\sup_{\tau \in \bR}
|F_\textnormal{d}(z_{n_k}+\tau)-\varphi(\tau)|
\to 0 
\quad \textrm{as } k \to \infty,
\end{align*}
see \cite[3.2.III, p.54]{corduneanu2009}.
Since $F_\textnormal{d}$, and hence also $\varphi$ are bounded on $\bR$, it can easily be seen that also 
\begin{align} \label{uc} 
\sup_{\tau \in \bR}
|F_\textnormal{d}(z_{n_k}+\tau)F_\textnormal{d}(z_{n_k}-\tau)
- \varphi(\tau)\varphi(-\tau)|
\to 0 
\quad \textrm{as } k \to \infty. 
\end{align}
Moreover, by the Riemann-Lebesgue lemma and since $|z_n| \to \infty$ as $n \to \infty$, we obtain the convergence 
\begin{align*}
F(z_{n_k} + \tau) 
= F_\textnormal{d}(z_{n_k} + \tau) + F_\textnormal{ac}(z_{n_k} + \tau) 
\to \varphi(\tau)
\quad \textrm{as }k \to \infty
\end{align*}
and hence also 
\begin{align*}
F(z_{n_k} + \tau)F(z_{n_k} - \tau) 
\to \varphi(\tau)\varphi(-\tau)
\quad \textrm{as }k \to \infty
\end{align*}
for all $\tau \in \bR$.

Suppose that there exists $\tau \in \bR$ such that $\varphi(\tau)\varphi(-\tau) \neq 0$, then we can find $K > 0$ such that for all $k \geq K$ it holds
\begin{align*}
|F(z_{n_k} + \tau)F(z_{n_k} - \tau) 
- \varphi(\tau)\varphi(-\tau)|
\leq \frac{|\varphi(\tau)\varphi(-\tau)|}{2},
\end{align*}
and hence 
\begin{align*}
|\psi_\tau(z_{n_k})|
= \left|
	\frac{F(z_{n_k}+\tau)F(z_{n_k}-\tau)}{F(z_{n_k})^2}
\right|
\geq \frac{|\varphi(\tau)\varphi(-\tau)|}{2|F(z_{n_k})|^2}
\to 0
\quad \textrm{as } k \to \infty,
\end{align*}
a contradiction to the boundedness of $\psi_\tau$.

Hence, it only remains to consider the case that $\varphi(\tau) \varphi(-\tau) = 0$ for all $\tau \in \bR$. 
For arbitrary $s \in \bR$ it follows from \eqref{uc} that
\begin{align} \label{pc}
\lim_{k \to \infty} F_\textnormal{d}(2z_{n_k}+s)F_\textnormal{d}(-s)
= 0. 
\end{align}
Applying \cite[Prop. 3.6 and 3.2.III]{corduneanu2009} again, there exists a subsequence $(z_{n_{k_l}})_{l \in \bN}$ of $(z_{n_k})_{k \in \bN}$ and an almost periodic function $\beta$ on $\bR$ such that 
\begin{align*}
\sup_{s \in \bR} |F_\textnormal{d}(2z_{n_{k_l}}+s) - \beta(s)| 
\to 0
\quad \textrm{as } l \to \infty. 
\end{align*}
By \eqref{pc} it holds $\beta(s) = 0$ for all $s \in \bR$ that satisfy $F_{\textnormal{d}}(-s) \neq 0$. 
Thus, we obtain 
\begin{align*}
\lim_{l \to \infty} \sup_{\{s \in \bR: F_\textnormal{d}(-s) \neq 0\}} |F_\textnormal{d}(2z_{n_{k_l}}+s)|
= 0.
\end{align*}
However, since the sequence $(z_n)_{n \in \bN}$ was chosen such that it satisfies $F_\textnormal{d}(-2z_n) \neq 0$ for all $n \in \bN$, it follows that 
\begin{align*}
0
= \lim_{l \to \infty} |F_\textnormal{d}(2z_{n_{k_l}}-2z_{n_{k_l}})| 
= |F_\textnormal{d}(0)|,
\end{align*}
a contradiction to $F_\textnormal{d}(0) \neq 0$.
Thus, $\inf_{z \in \bR}|F(z)| = 0$ cannot hold, that is, (i) is satisfies. 

\subsection*{Acknowledgements}
The authors would like to thank Alexander Lindner for fruitful discussions and comments, which helped to improve the paper.
Furthermore, the authors would like to thank Ren\'e Schilling for his valuable comments. 
Financial support through the DFG-NCN Beethoven Classic 3 project SCHI419/11-1 \&  NCN 2018/31/G/ST1/02252 is gratefully acknowledged.


\begin{thebibliography}{99}

\bibitem{alexeev2021}{Alexeev, I.A. and Khartov, A.A. (2021). Spectral representations of characteristic functions of discrete probability laws. {\it Preprint, arXiv:2101.06038.}}

\bibitem{aoyama2013}{
Aoyama, T. and Nakamura, T. (2013).
{Behaviors of multivariable finite Euler products in probabilistic view}.
\emph{Math. Nachr.} {\bf 286}, 1691--1700.}

\bibitem{balan2010}{Balan, R. and Krishtal, I. (2010). An almost periodic noncommutative Wiener's Lemma. {\it Journal of Mathematical Analysis and Applications.} {\bf 370} (2), 339--349.}

\bibitem{berger2018}{Berger, D. (2018). On quasi-infinitely divisible distributions with a point mass. {\it Math.
Nachr.} {\bf 292}, 1674--1684.}

\bibitem{berger2021}{Berger, D., Kühn, F. and Schilling, R. L. (2021). L\'evy processes, generalized moments and uniform integrability . {\it Preprint, arXiv:2102.09004.} To appear in Probab. Math. Statist.}

\bibitem{berger2022-1}{Berger, D., Kutlu, M. and Lindner, A. (2022). On multivariate quasi-infinitely divisible distributions. In: Chaumont, L., Kyprianou, A.E. (eds) A Lifetime of Excursions Through Random Walks and Lévy Processes. Progress in Probability, vol 78. Birkhäuser, Cham.}

\bibitem{berger2022-2}{Berger, D., Lindner, A. (2022). A Cram{\'e}r–Wold device for infinite divisibility of $\bZ^d$-valued distributions. {\it Bernoulli } {\bf 28} (2), {1276--1283}.}

\bibitem{chhaiba2016}{Chhaiba, H., Demni, N. and Mouayn, Z. (2016). Analysis of generalized negative binomial distributions attached to hyperbolic Landau levels. \emph{J. Math. Phys.} {\bf 57} (7), 072103, 14pp.}

\bibitem{corduneanu2009}{Corduneanu, C. (2009). {\it Almost Periodic Oscillations and Waves}. Springer New York.}

\bibitem{cuppens1975}
    Cuppens, R. (1975). \emph{Decomposition of Multivariate Probabilites}, Academic Press, New York.
    
\bibitem{demni2015}{Demni, N. and Mouayn, Z. (2015). Analysis of generalized Poisson distributions associated with higher Landau leves. \emph{Infin. Dimens. Anal. Quantum Probab. Relat. Top.} {\bf 18} (4), 1550028, 13pp.}

\bibitem{evans2010}{Evans, L. C. (2010). Partial Differential Equations. Providence (RI), American Mathematical Society.}

\bibitem{gelfand1964}{Gelfand, I.M., Raikov, D. A. and Shilov, G.E. (1964). {\it Commutative Normed Rings}. New York: Chelsea publishing company.}

\bibitem{groechenig2010}{Gröchenig, K. (2010). Wiener's Lemma: theme and variations. An introduction to spectral invariance and its applications. In: {\it Four Short Courses on Harmonic Analysis}. Birkhäuser Boston, 175--234.}

\bibitem{khartov2022}{Khartov, A.A. (2022). A criterion of quasi-infinite divisibility for discrete laws. {\it Statistics \& Probability Letters}. {109436}. }

\bibitem{krein1962}{Krein, M.G. (1962). Integral equations on a half-line with kernel depending upon the difference of the arguments. {\it Amer. Math. Soc. Transl. }{\bf 22}, 163--288.}

\bibitem{levitan1982}{Levitan, B.M. and Zhikov, V.V. (1982). {\it Almost periodic functions and differential equations.} Cambridge University Press, Cambridge.} 

\bibitem{lindner2018}{Lindner, A., Pan, L. and Sato, K. (2018). On quasi-infinitely divisible distributions. {\it Trans. Amer. Math. Soc. }{\bf 370}, 8483--8520.}

\bibitem{nakamura2015}{Nakamura, T. (2015). {A complete Riemann zeta distribution and the Riemann hypothesis}. \emph{Bernoulli} {\bf 21}, 604--617.}

\bibitem{sato2013}{Sato, K. (2013). {\it L\'evy Processes and Infinitely Divisible Distributions}. Corrected Printing with Supplement. Cambridge University Press, Cambridge.}

\bibitem{zhang2014}{Zhang, H., Liu, Y. and Li, B. (2014). Notes on discrete compound Poisson model with
applications to risk theory. \emph{Insurance Math. Econom.} {\bf 59}, 325--336.}

\end{thebibliography}
\end{document}